\documentclass[]{paper}

\usepackage{epstopdf}
\usepackage[caption=false]{subfig}
\usepackage{algorithm}
\usepackage{algorithmic}
\usepackage[numbers,sort&compress]{natbib}
\bibpunct[, ]{[}{]}{,}{n}{,}{,}
\renewcommand\bibfont{\fontsize{10}{12}\selectfont}

\theoremstyle{plain}
\newtheorem{theorem}{Theorem}[section]
\newtheorem{assumption}{Assumption}
\newtheorem{lemma}[theorem]{Lemma}
\newtheorem{corollary}[theorem]{Corollary}

\theoremstyle{definition}
\newtheorem{definition}[theorem]{Definition}

\theoremstyle{remark}

\begin{document}


\title{Stochastic Trust-Region Methods for Over-parameterized Models}


\author{
Aike Yang\textsuperscript{a} \and
Hao Wang\textsuperscript{a} \\
\textsuperscript{a}ShanghaiTech University, Shanghai, China\footnote{This is a preprint.}
}

\maketitle

\begin{abstract}
Under interpolation-type assumptions such as the strong growth condition, stochastic optimization methods can attain convergence rates comparable to full-batch methods, but their performance—particularly for SGD—remains highly sensitive to step-size selection. To address this issue, we propose a unified stochastic trust-region framework that eliminates manual step-size tuning and extends naturally to equality-constrained problems. For unconstrained optimization, we develop a first-order stochastic trust-region algorithm and show that, under the strong growth condition, it achieves an iteration and stochastic first-order oracle complexity of $O(\varepsilon^{-2}\log(1/\varepsilon))$ for finding an $\varepsilon$-stationary point. For equality-constrained problems, we introduce a quadratic-penalty-based stochastic trust-region method with penalty parameter $\mu$, and establish an iteration and oracle complexity of $O(\varepsilon^{-4}\log(1/\varepsilon))$ to reach an $\varepsilon$-stationary point of the penalized problem, corresponding to an $O(\varepsilon)$-approximate KKT point of the original constrained problem. Numerical experiments on deep neural network training and orthogonally constrained subspace fitting demonstrate that the proposed methods achieve performance comparable to well-tuned stochastic baselines, while exhibiting stable optimization behavior and effectively handling hard constraints without manual learning-rate scheduling.

\end{abstract}

\begin{keywords}
stochastic optimization; trust-region methods; interpolation; strong growth condition; over-parameterized models
\end{keywords}

\section{Introduction}

Stochastic optimization plays a fundamental role in modern machine learning, data science, and large-scale scientific computing.
A canonical example is the empirical risk minimization (ERM) problem
\[
\min_x f(x)=\frac{1}{n}\sum_{i=1}^{n}f_i(x),
\]
where $n$ denotes the number of data samples and $f_i(x)$ represents the loss associated with the $i$-th observation.
This formulation encompasses a wide range of applications, including supervised learning, inverse problems, and parameter estimation in large-scale models
\cite{vapnik1999overview,bottou2018optimization,shalev2014understanding}.

Among stochastic optimization methods, stochastic gradient descent (SGD) and its variants are arguably the most widely used due to their simplicity and low per-iteration cost.
By approximating the full gradient using randomly sampled data points or mini-batches, SGD significantly reduces computational overhead compared to deterministic gradient methods
\cite{robbins1951stochastic,bottou2010large}.
The convergence behavior of SGD is classically analyzed under the bounded variance assumption, which postulates that the variance of the stochastic gradient estimator is uniformly bounded
\cite{nemirovski2009robust,Ghadimi2013SGD}.
While this assumption enables convergence guarantees in both convex and nonconvex settings, it can be overly conservative for modern over-parameterized models.

Recent advances in stochastic optimization theory have emphasized the importance of interpolation and the strong growth condition (SGC) as refined characterizations of gradient noise in stochastic optimization
\cite{schmidt2018minimizing,vaswani2019fast}.
The interpolation condition asserts that any stationary point of the empirical objective is simultaneously a stationary point of each individual loss function, namely,
\[
\nabla f(x)=0 \Rightarrow \nabla f_i(x)=0, \quad \forall i.
\]
This property is frequently observed in over-parameterized learning models, including deep neural networks trained to zero training error
\cite{zhang2016understanding,belkin2019reconciling}.

Under interpolation, the stochastic gradient often satisfies the strong growth condition
\begin{equation*}
 \mathbb{E}\bigl[\| g_k \|^2\bigr] \le \rho \|\nabla f(x_k)\|^2,
\end{equation*}
where $g_k$ denotes the stochastic gradient at iteration $k$ and $\rho \ge 1$ is a constant.
The SGC implies interpolation and provides a sharper control of gradient noise compared to bounded variance assumptions
\cite{vaswani2019fast}.
Notably, under the SGC, SGD with a constant step size can achieve convergence rates comparable to deterministic gradient descent, even in nonconvex optimization
\cite{vaswani2019fast,zhou2019sgd}.

Despite these developments, most existing results under interpolation-based assumptions focus on SGD-type methods.
By contrast, comparatively little attention has been devoted to incorporating such assumptions into stochastic trust-region algorithms.
Trust-region (TR) methods constitute a classical and powerful framework in deterministic nonlinear optimization
\cite{conn2000trust,nocedal2006numerical}.
At each iteration, a TR method constructs a local quadratic model and restricts the trial step to a region where the model is deemed reliable, accepting the step only if the ratio of actual to predicted reduction exceeds a prescribed threshold.
This mechanism endows TR methods with strong robustness and global convergence guarantees, even in nonconvex settings.

Extending trust-region methods to stochastic optimization introduces substantial challenges.
The local model is built from noisy gradient or Hessian information, and the acceptance ratio becomes a random quantity, complicating both algorithmic design and theoretical analysis
\cite{chen2018stochastic,curtis2022fully}.
Several stochastic trust-region (STR) algorithms have been proposed in recent years, primarily under bounded variance assumptions
\cite{curtis2022fully,wang2019stochastic}.
However, the interplay between STR methods and interpolation-based conditions such as the SGC remains largely unexplored.

The problem becomes even more challenging in the presence of constraints.
Constrained stochastic optimization arises naturally in many applications, including constrained learning formulations, signal processing, and scientific computing
\cite{bertsekas1997nonlinear,recht2013parallel}.
In this work, we consider equality-constrained problems of the form
\[
\begin{aligned}
& \min_{x \in \mathbb{R}^d}\; f(x)=\frac{1}{n}\sum_{i=1}^{n} f_i(x), \\
& \text{s.t.} \quad c(x)=0,
\end{aligned}
\]
where $c:\mathbb{R}^d \to \mathbb{R}^m$ denotes a (possibly nonlinear) constraint function.

A common strategy for handling equality constraints is the use of quadratic penalty or augmented Lagrangian methods
\cite{nocedal2006numerical,bertsekas2014constrained}.
While these approaches are well understood in deterministic optimization, their stochastic counterparts often require the penalty parameter to scale as $\mu = O(1/\epsilon)$ in order to achieve $\epsilon$-feasibility, which can significantly deteriorate complexity bounds
\cite{xu2021first,boob2023stochastic}.
Existing analyses of penalty-based stochastic methods largely focus on first-order schemes, and their integration with stochastic trust-region frameworks remains limited.

In this paper, we develop a unified stochastic trust-region framework under interpolation-based assumptions for both unconstrained and constrained optimization problems.
Our main contributions are summarized as follows:
\begin{itemize}
    \item For the unconstrained case, we analyze a first-order stochastic trust-region algorithm under the strong growth condition and establish an iteration and stochastic first-order oracle complexity of $O(\epsilon^{-2}\log(1/\epsilon))$ to reach $\|\nabla f(x)\| \le \epsilon$.

    \item For the constrained case, we propose a penalty-based stochastic trust-region method.
    By selecting the penalty parameter as $\mu = 1/\epsilon$, we prove an iteration and oracle complexity of $O(\epsilon^{-4}\log(1/\epsilon))$ to obtain an $O(\varepsilon)$-approximate KKT point of the original constrained problem.

    \item We validate the proposed methods through numerical experiments on nonconvex deep learning tasks and orthogonally constrained subspace fitting problems.
    The results demonstrate that stochastic trust-region methods are competitive with standard stochastic optimizers in unconstrained settings, while providing a principled and flexible framework for constrained stochastic optimization.
\end{itemize}

\section{Literature review}

Stochastic optimization has been extensively studied in the past decade. Early works focused on stochastic gradient descent (SGD) and its convergence under the bounded variance assumption. In particular,  \cite{Ghadimi2013SGD} established an $O(1/\sqrt{K})$ rate for nonconvex stochastic optimization, which became the benchmark complexity result. To go beyond the variance-bounded setting, interpolation and the strong growth condition (SGC) were introduced. These assumptions, frequently satisfied in over-parameterized models, allow constant step-size SGD to achieve accelerated convergence. Notably, \cite{Schmidt2013StrongGrowth} showed fast rates of SGD under SGC in convex settings, and \cite{Vaswani2019constant,vaswani2019painless} further extended such results to nonconvex problems.

Trust-region (TR) methods have a long tradition in deterministic nonlinear optimization. They are well known for global convergence and strong robustness, as surveyed in classical texts on nonlinear programming \cite{conn2000trust}. In recent years, several works have studied stochastic variants of TR methods, where the model is built from noisy gradients or Hessians. 
For example, \cite{Gratton2015STR} introduced a stochastic TR framework with probabilistic models, showing global convergence under noise; \cite{chen2018stochastic} and \cite{Cartis2018Prob} extended analysis to nonconvex and derivative-free settings; \cite{Paquette2020SCR} proposed stochastic cubic-regularization versions with improved sample complexity.  
Moreover, variance-reduction techniques have recently been incorporated into stochastic TR frameworks, as in the TR-SVR method, which combines trust-region modelling with variance-reduced gradient estimators to enhance efficiency and stability in large-scale stochastic optimization tasks \cite{Zheng2024TRSVR}. Beyond the classical Lipschitz-smooth setting, \cite{Xie2024TRGeneralized} extend stochastic trust-region methods to the more general \((L_0, L_1)\)-smoothness regime, relevant for deep learning and distributionally robust optimization, achieving first- and second-order convergence; notably, with variance reduction, their second-order TR method attains an \(\mathcal{O}(\varepsilon^{-3})\) complexity—an optimal bound in this context. Earlier, \cite{Shen2020STR} introduced a stochastic trust-region algorithm using variance-reduced Hessian estimators for nonconvex finite-sum minimization, obtaining convergence to an \((\varepsilon, \sqrt{\varepsilon})\)-approximate local minimum in \(\tilde{\mathcal{O}}(\sqrt{n}/\varepsilon^{1.5})\) stochastic Hessian oracle calls—a state-of-the-art sample complexity at the time.  
Other notable approaches include the inexact-restoration-based SIRTR method (\cite{Bellavia2023SIRTR}), which establishes convergence in probability for stochastic trust-region steps in finite-sum contexts, and STARS (\cite{DzahiniWild2022STARS}), a derivative-free trust-region framework using random subspaces with almost-sure convergence and competitive iteration complexity. Finally, \cite{Ha2024ASTRO} analyze variance-reduction enhancements in stochastic TR model-update and candidate-evaluation steps, showing dramatic improvements in almost-sure sample complexity—from \(\tilde{\mathcal{O}}(\varepsilon^{-6})\) to \(\tilde{\mathcal{O}}(\varepsilon^{-2})\) in smooth first-order settings.  
These works mainly focused on the unconstrained setting and demonstrated that stochastic TR methods can maintain favorable complexity guarantees comparable to stochastic gradient methods, while often achieving better practical robustness. However, theoretical understanding in this direction is still relatively limited compared to line-search–based methods \cite{Ghadimi2013SGD}.

The study of constrained stochastic optimization has largely centered on stochastic sequential quadratic programming (SQP) and augmented Lagrangian methods \cite{bertsekas2014constrained,Curtis2021StochasticSQP}. Penalty-based approaches, although widely used in deterministic optimization \cite{conn2000trust}, have received less attention in the stochastic context. One reason is that the choice of penalty parameter has a significant impact: ensuring feasibility typically requires setting $\mu = 1/\epsilon$, which directly influences convergence complexity \cite{Birgin2014ALM}. Recent works on stochastic min–max optimization and constrained learning have begun to address these challenges \cite{Nouiehed2019Minimax,Lin2020Minimax,Rafique2021WeaklyConvex,wang2023zeroth}, but penalty-based stochastic trust-region methods remain underexplored.

Our work contributes to filling this gap by establishing a unified analysis for unconstrained and penalty-based constrained stochastic trust-region methods.

\section{Problem Statement and Algorithm}
In this section, we present the stochastic trust-region framework studied in this paper.
We first introduce the unconstrained finite-sum stochastic optimization problem and review
a first-order stochastic trust-region method.
We then extend the framework to equality-constrained problems by means of a quadratic penalty formulation,
leading to a stochastic trust-region algorithm for constrained optimization.

\subsection{Unconstrained Stochastic Optimization}
We consider the finite-sum stochastic optimization problem
\[
\min_{x \in \mathbb{R}^d} f(x) := \frac{1}{n} \sum_{i=1}^n f_i(x),
\]
where each component function \( f_i:\mathbb{R}^d \to \mathbb{R} \) is assumed to be continuously differentiable. 
Problems of this form arise ubiquitously in machine learning and data-driven optimization, where the objective is typically accessed through inexpensive but noisy gradient evaluations of individual samples.

Rather than relying on predetermined step sizes as in classical stochastic gradient methods, we adopt a stochastic trust-region perspective, which dynamically controls the step length according to local model adequacy.
At iteration \(k\), given the current iterate \(x_k\), a local quadratic model of \(f\) is constructed as
\[
m_k^{\text{quad}}(p) = f(x_k) + g_k^\top p + \tfrac{1}{2} p^\top H_k p, 
\quad \|p\| \leq \Delta_k,
\]
where \(g_k\) denotes a stochastic estimator of \(\nabla f(x_k)\), \(H_k\) is an approximation to the Hessian, and \(\Delta_k>0\) is the trust-region radius.
The trial step \(p_k\) is obtained by approximately solving the subproblem.

To isolate the essential mechanisms of the method and facilitate complexity analysis, we first focus on a first-order variant in which curvature information is ignored by setting \(H_k = I\).
In this case, the model simplifies to
\begin{equation}
\label{subproblem 1}
    m_k(p) = g_k^\top p + \tfrac{1}{2}\|p\|^2, 
\quad \|p\| \leq \Delta_k,
\end{equation}
which admits a closed-form solution given by
\[
p_k = -a_k g_k, \quad 
a_k = 
\begin{cases}
1, & \|g_k\| \leq \Delta_k, \\[6pt]
\Delta_k / \|g_k\|, & \|g_k\| > \Delta_k.
\end{cases}
\]
Thus, the step corresponds to a truncated negative gradient direction, where the truncation is automatically enforced by the trust-region constraint.
This construction can be interpreted as an adaptive step-size rule driven by model-based acceptance criteria, rather than by predetermined schedules.

The quality of the trial step is assessed through the ratio of actual to predicted reduction, computed using the same randomly sampled component function.
Based on this ratio, the algorithm decides whether to accept the step and how to update the trust-region radius.
The resulting procedure is summarized in Algorithm~\ref{alg:stochastic-tr}.

\begin{algorithm}[h]
    \caption{First-Order Stochastic Trust-Region Method}
    \label{alg:stochastic-tr}
    \begin{algorithmic}[1]
        \REQUIRE Constants $\bar{\Delta}>0$, $\Delta_0\in(0,\bar{\Delta})$, $0<c_0\leq c_1\leq c_2<1$, and $\nu_1, \nu_2>1$.
        \FOR{$k=0,1,2,\ldots$}
            \STATE Sample an index $i \in \{1,\dots,n\}$ and compute a stochastic gradient $g_k = \nabla f_{i}(x_k)$.
            \STATE Compute a Cauchy-type step solving the trust-region subproblem:
            \[
            p_k = \arg\min_{\|p\|\leq \Delta_k} \; m_k(p).
            \]
            \STATE Evaluate the ratio of actual to predicted reduction:
            \[
            r_k = \frac{f_{i}(x_k) - f_{i}(x_k+p_k)}{m_k(0) - m_k(p_k)}.
            \]
            \IF{$r_k > c_0$}
                \STATE $x_{k+1} = x_k + p_k$.
            \ELSE
                \STATE $x_{k+1} = x_k$.
            \ENDIF
            \STATE Update the trust-region radius:
            \[
            \Delta_{k+1} =
            \begin{cases}
                \Delta_k / \nu_1, & r_k < c_1, \\[0.8ex]
                \min(\nu_2 \Delta_k, \bar{\Delta}), & r_k > c_2, \\[0.8ex]
                \Delta_k, & \text{otherwise.}
            \end{cases}
            \]
        \ENDFOR
    \end{algorithmic}
\end{algorithm}

\subsection{Equality-Constrained Optimization via Quadratic Penalty}
In many practical scenarios the decision variables are required to satisfy structural constraints,
which motivates extending the proposed stochastic trust-region framework to constrained settings.
In this work, we restrict our attention to equality-constrained problems of the form
\begin{equation}
\label{eq:constraint}
    \begin{aligned}
    & \min_{x \in \mathbb{R}^d} \; f(x)=\frac{1}{n}\sum_{i=1}^{n} f_i(x), \\
    & \text{s.t.} \quad c(x)=0,
    \end{aligned}
\end{equation}
where \( c:\mathbb{R}^d \to \mathbb{R}^m \) is continuously differentiable.
Focusing on equality constraints allows us to clearly illustrate the interaction between stochasticity,
trust-region mechanisms, and constraint handling, while keeping the algorithmic structure tractable.

Directly enforcing feasibility within a stochastic trust-region framework can be challenging due to noisy gradient information.
To circumvent this difficulty, we adopt a quadratic penalty approach, transforming the constrained problem into a sequence of unconstrained ones.
Specifically, we consider the penalized objective
\[
\phi(x) := f(x)+\frac{\mu}{2}\|c(x)\|^{2},
\]
where \(\mu>0\) is a fixed penalty parameter that controls the trade-off between objective minimization and constraint satisfaction.

Accordingly, we define the per-sample penalized function
\[
\varphi_i(x) := f_i(x) + \frac{\mu}{2}\|c(x)\|^2,
\]
and construct at iteration \(k\) the stochastic gradient and Hessian approximation
\[
g_k := \nabla f_{i}(x_k) + \mu \nabla c(x_k)^{\top} c(x_k), 
\qquad
H_k := I + \mu \nabla c(x_k)^{\top}\nabla c(x_k),
\]
where \(H_k\) is symmetric positive definite and captures curvature information induced by the constraint at negligible additional cost.

The resulting quadratic model is
\[
m_k^{\phi}(p)= g_k^{\top} p+\frac{1}{2}p^{\top}H_kp, 
\qquad \|p\|\leq \Delta_k,
\]
whose minimizer over the trust region admits a Cauchy-type closed form
\[
p_k = -a_k g_k, \quad 
a_k =
\begin{cases} 
\dfrac{\|g_k\|^2}{g_k^{\top}H_kg_k}, & \|p_k\| < \Delta_k, \\[1.2ex]
\dfrac{\Delta_k}{\|g_k\|}, & \|p_k\|=\Delta_k.
\end{cases}
\]

As in the unconstrained case, the step length is automatically regulated by the trust-region radius, while the penalty term steers the iterates toward feasibility.

The complete stochastic trust-region method with quadratic penalty is summarized in Algorithm~\ref{alg:penalty-tr}.
Throughout the paper, we measure complexity in terms of stochastic first-order oracle calls.
Since Algorithms~\ref{alg:stochastic-tr} and~\ref{alg:penalty-tr} sample exactly one component gradient per iteration,
iteration complexity coincides with oracle complexity, a convention that will be adopted in the subsequent analysis.

\begin{algorithm}[h]
	\caption{Stochastic Trust-Region Method with Quadratic Penalty}
	\label{alg:penalty-tr}
	\begin{algorithmic}[1]
		\REQUIRE Constants $\bar{\Delta}>0$, $\Delta_0\in(0,\bar{\Delta})$, $0<c_0\leq c_1\leq c_2<1$, and $\nu>1$.
		\FOR{$k=0,1,2,\ldots$}
		    \STATE Sample an index $i \in \{1,\dots,n\}$ and compute $\nabla f_{i}(x_k)$.
		    \STATE Form $g_k = \nabla f_{i}(x_k) + \mu \nabla c(x_k)^{\top} c(x_k)$ and $H_k = I + \mu \nabla c(x_k)^{\top}\nabla c(x_k)$.
		    \STATE Compute a Cauchy-type step solving the trust-region subproblem: 
		    \[
		    p_k = \arg\min_{\|p\|\leq \Delta_k} m_k^{\phi}(p).
		    \]
		    \STATE Evaluate the ratio
		    \[
		    r_k = \frac{\varphi_{i}(x_k) - \varphi_{i}(x_k + p_k)}{m_k^{\phi}(0) - m_k^{\phi}(p_k)}.
		    \]
		    \IF{$r_k > c_0$}
		        \STATE $x_{k+1} = x_k + p_k$.
		    \ELSE
		        \STATE $x_{k+1} = x_k$.
		    \ENDIF
		    \STATE Update $\Delta_{k+1}$ as in Algorithm~\ref{alg:stochastic-tr}.
		\ENDFOR
	\end{algorithmic}
\end{algorithm}

\section{Global Convergence Analysis}

In this section we study the global convergence properties of the proposed stochastic trust-region algorithms. 
We first analyze the unconstrained method (Algorithm~\ref{alg:stochastic-tr}) and then extend the analysis to the equality-constrained variant (Algorithm~\ref{alg:penalty-tr}). 
The latter algorithm can be viewed as a natural extension of the former through a quadratic penalty formulation.

\subsection{Convergence Analysis for the Unconstrained Problem}

In this subsection we analyze the global convergence of
Algorithm~\ref{alg:stochastic-tr}.
To establish the theoretical results, we first introduce several
standard assumptions used throughout the analysis of the unconstrained problem.

We first assume that the stochastic gradient estimator is unbiased.
\begin{assumption}
\label{assu:unbiased_uc}
For all $k \in \mathbb{N}$, the stochastic gradient estimator $g_k$ satisfies
\[
\mathbb{E}_k[g_k] = \nabla f(x_k).
\]
\end{assumption}

This assumption ensures that the stochastic gradient is an unbiased estimator of the true gradient,
which is standard in the analysis of stochastic optimization algorithms.

Next we impose a standard smoothness condition on the component functions.
\begin{assumption}
\label{assu:smooth_uc}
Each function $f_i : \mathbb{R}^d \to \mathbb{R}$ is Lipschitz continuously differentiable, i.e.,
\[
\|\nabla f_i(x) - \nabla f_i(y)\|
\le
L_{f,i}\|x-y\|,
\qquad \forall x,y\in\mathbb{R}^d.
\]

Consequently, the averaged function $f(x)$ satisfies
\[
\|\nabla f(x)-\nabla f(y)\|
\le
L_f\|x-y\|.
\]

At iteration $k$, the local constants satisfy
\[
L_{f,i}^k \le L_{f,i}.
\]
\end{assumption}

To control the stochasticity of the gradient estimators, we impose the strong growth condition.

\begin{assumption}
\label{assu:sgc_uc}
At any iteration $k$, the stochastic gradient estimator $g_k$ satisfies
\[
\mathbb{E}\!\left[\|g_k\|^{2}\right]
\le
\rho\,\|\nabla f(x_k)\|^{2},
\]
where $\rho>0$ is a constant.
\end{assumption}

Assumption~\ref{assu:sgc_uc} has been widely used
in the analysis of stochastic gradient methods. It has been shown to hold in various learning problems,
particularly in over-parameterized regimes encountered in modern machine learning models.

Under the above assumptions, we first establish several auxiliary results that will be used throughout the convergence analysis.

Recall from \eqref{subproblem 1} that the step $p_k$ solves the trust-region subproblem and admits the form 
\[p_k = -a_k g_k,\] 
where $g_k = \nabla f_i(x_k)$ and $a_k\in(0,1]$. Controlling the range of $a_k$ is crucial for establishing
sufficient descent at each successful iteration.

The following Lemma \ref{lem:range_ak_1} characterize the range of $a_k$.

\begin{lemma}
    \label{lem:range_ak_1}
    Suppose that Assumption \ref{assu:smooth_uc} holds. Then the parameter $a_{k}$ generated by the Algorithm \ref{alg:stochastic-tr} satisfies the following inequality
    \[
    \frac{2\left(1- c_0\right)}{L_{f,i}^k- c_0}\leq a_{k}\leq1.
    \]
\end{lemma}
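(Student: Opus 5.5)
The upper bound $a_k\le 1$ is immediate from the closed form of the minimizer of \eqref{subproblem 1}: either $a_k=1$, or $a_k=\Delta_k/\|g_k\|$ with $\|g_k\|>\Delta_k$, which is strictly less than $1$. So the plan concentrates on the lower bound. The idea is to relate the threshold $2(1-c_0)/(L_{f,i}^k-c_0)$ to the acceptance test $r_k>c_0$ by computing the actual and predicted reductions along the ray $p=-a g_k$ as functions of $a$.

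First I compute the predicted reduction exactly:
\[
m_k(0)-m_k(p_k)=a_k\|g_k\|^2-\tfrac12 a_k^2\|g_k\|^2 = a_k\bigl(1-\tfrac{a_k}{2}\bigr)\|g_k\|^2>0 .
\]
Next, by Assumption~\ref{assu:smooth_uc} applied to the sampled $f_i$ with local constant $L_{f,i}^k$, the descent lemma gives
\[
f_i(x_k)-f_i(x_k+p_k)\ \ge\ a_k\|g_k\|^2-\tfrac{L_{f,i}^k}{2}a_k^2\|g_k\|^2 .
\]
Dividing, I obtain
\[
r_k \ \ge\ \frac{1-\tfrac{L_{f,i}^k}{2}a_k}{1-\tfrac{a_k}{2}} .
\]
Then I solve for when this bound exceeds $c_0$. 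Rearranging $1-\tfrac{L}{2}a\ge c_0\bigl(1-\tfrac a2\bigr)$ gives $a(L-c_0)\le 2(1-c_0)$, that is,
\[
a\le \frac{2(1-c_0)}{L_{f,i}^k-c_0}.
\]
This is the only place the constant appears. It shows that every step whose coefficient lies at or below the threshold is accepted. The same computation with $c_1$ shows that the radius is never shrunk for such steps.

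The main obstacle is turning this into a lower bound. The descent-lemma computation says that a step can be rejected, or the radius shrunk, only when $a_k$ is above the threshold. My route is to argue along the radius dynamics. The coefficient $a_k$ is reduced below $1$ only through $\Delta_k$, and $\Delta_k$ is reduced only on iterations where the test fails. By the computation above, such iterations have $a_k$ above the threshold. Hence the truncation can never push the accepted coefficient below $2(1-c_0)/(L_{f,i}^k-c_0)$. I would make this precise by showing that on any accepted iteration, either $a_k=1$ (and $1$ dominates the threshold whenever $L_{f,i}^k\ge 2-c_0$, while the case $L_{f,i}^k\le 1$ is trivial) or $a_k=\Delta_k/\|g_k\|$, with $\Delta_k$ inherited from a previous rejection. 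I expect a factor $1/\nu_1$ and the sample-dependence of $\|g_k\|$ to be the delicate points. If that bookkeeping fails, I would fall back to stating the bound as the characterization ``$r_k\le c_0$ implies $a_k>2(1-c_0)/(L_{f,i}^k-c_0)$,'' which is exactly what the subsequent sufficient-descent argument needs.
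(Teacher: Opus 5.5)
Your upper bound $a_k\le 1$ and your lower bound on $r_k$ are correct. The main route cannot be completed, though, because the lower bound is false as stated. The points you flag as delicate are therefore fatal, not bookkeeping.

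Put $T:=2(1-c_0)/(L_{f,i}^k-c_0)$ and take $f_i(x)=\frac{L}{2}\|x\|^2$ for every $i$, with $L>c_0$. This example satisfies all three assumptions of the unconstrained analysis, and $T\ge 2(1-c_0)/(L-c_0)$ for any local constant $c_0<L_{f,i}^k\le L$. Your descent-lemma inequality is then an identity, $r_k=(1-\frac{L}{2}a_k)/(1-\frac{a_k}{2})$. So $r_k>c_0$ holds if and only if $a_k<2(1-c_0)/(L-c_0)\le T$, and every successful iteration violates the claimed bound. Successful iterations do occur, e.g.\ at $k=0$ whenever $\Delta_0<\|g_0\|\cdot 2(1-c_0)/(L-c_0)$, which the free choice of $x_0$ and $\Delta_0\in(0,\bar\Delta)$ permits.

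Your remark that the case $L_{f,i}^k\le 1$ is trivial is also wrong: for $c_0<L_{f,i}^k<2-c_0$ one has $T>1$, so the claimed interval is empty.

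Each premise of your radius argument also fails concretely:
\begin{itemize}
\item Truncation can happen at $k=0$ with no prior rejection.
\item After a rejection with $a_k=\Delta_k/\|g_k\|$ just above $T$, the radius is divided by $\nu_1$. With the same gradient, the next (accepted) coefficient is about $T/\nu_1$.
\item $\|g_k\|=\|\nabla f_i(x_k)\|$ varies with the sample and the iterate, so $\Delta_k/\|g_k\|$ is not controlled by $\Delta_k$.
\item Algorithm~\ref{alg:stochastic-tr} also shrinks the radius on accepted iterations with $c_0<r_k<c_1$, so $\Delta_k$ is not reduced only when the test fails. Your $c_1$ computation protects only steps below the smaller threshold $2(1-c_1)/(L_{f,i}^k-c_1)$ when $L_{f,i}^k>1$ and $c_1>c_0$.
\end{itemize}

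The paper's proof does not resolve this either. It writes the descent-lemma bound and the acceptance test as two upper bounds on $f_i(x_{k+1})$ and ``combines'' them into $c_0(1-\frac{a_k}{2})\ge 1-\frac{L_{f,i}^k}{2}a_k$. Two upper bounds on the same quantity imply nothing about each other, and the example above shows the conclusion is false. Your reading of the descent lemma as a lower bound on the actual reduction is the valid way to combine these ingredients, and it shows the implication runs in the opposite direction.

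So your fallback is the true content, in the form $r_k\le c_0\Rightarrow a_k\ge T$. It should read $\ge$ rather than $>$, since the test $r_k>c_0$ is strict, and equality occurs in the example. But it concerns rejected iterations. Contrary to your last sentence, it does not supply what Theorem~\ref{thm:first_order} uses, namely $a_k\ge a_{\min}$ on successful iterations, where the step is actually taken.

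A lower bound on accepted coefficients has to come from $a_k=\min\{1,\Delta_k/\|g_k\|\}$, together with a lower bound on $\Delta_k$ and an upper bound on $\|g_k\|$. It cannot be the constant $T$.
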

\begin{proof}
By the smoothness of $f_i$, we have
\begin{align}
f_{i}\left(x_{k+1}\right) & \leq f_{i}\left(x_{k}\right)-\left\langle g_k,x_{k+1}-x_{k}\right\rangle +\frac{L_{f,i}^k}{2}\left\Vert x_{k+1}-x_{k}\right\Vert ^{2} \notag\\
 & =f_{i}\left(x_{k}\right)-a_{k}\left\Vert g_k\right\Vert ^{2}+\frac{L_{f,i}^k a_{k}^{2}}{2}\left\Vert g_k\right\Vert ^{2} \notag\\
 & =f_{i}\left(x_{k}\right)-a_{k}\left(1-\frac{L_{f,i}^ka_{k}}{2}\right)\left\Vert g_k\right\Vert ^{2}.
 \label{eq:smooth}
\end{align}
On the other hand, by the acceptance criterion in Algorithm~\ref{alg:stochastic-tr}, any successful iteration satisfies
\[f_i(x_k)-f_i(x_{k+1})
\ge
c_0\big(m_k(0)-m_k(p_k)\big).\]
Using the explicit form of $p_k$, this implies
\begin{equation}
\label{eq:accept}
    f_i(x_{k+1})\le f_i(x_k)-c_0 a_k\!\left(1-\frac{a_k}{2}\right)\!\|g_k\|^2.
\end{equation}
Combining \eqref{eq:smooth} and \eqref{eq:accept} yields
\[c_0\!\left(1-\frac{a_k}{2}\right)
\ge
1-\frac{L_{f,i}^k a_k}{2},\]
which is equivalent to
\[
\frac{L_{f,i}^k - c_0}{2}\, a_k \ge 1-c_0.
\]
Since $L_{f,i}^k>c_0$, we obtain
\[
a_{k}  \geq\frac{2\left(1- c_0\right)}{L_{f,i}^k - c_0}
\]

The upper bound of $a_k \le 1$ follows directly from the definition of $a_k$.
\end{proof}

Next, we begin to analyze the lower bound on trust region radius to make sure that there must exist infinitely many accepted steps.

\begin{lemma}
	\label{lem:penalty-model-accuracy}
	Suppose that Assumption \ref{assu:smooth_uc} holds. Then, there exists a constant \( M > 0 \) such that
	\[
	|m_k(p_k) - f_i(x_k + p_k)| \leq M \|p_k\|^2,
	\]
    where $M=L_{f,\max}$ and $L_{f,\max} = \max_{i}L_{f,i}$.
\end{lemma}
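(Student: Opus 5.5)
The plan is a direct Taylor-type estimate using only the Lipschitz continuity of $\nabla f_i$ from Assumption~\ref{assu:smooth_uc}. No probabilistic argument is needed, because the inequality is pathwise for the sampled index $i$ and the computed step $p_k$.

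First I fix the constant term in the model. The ratio $r_k$ compares $f_i(x_k)-f_i(x_k+p_k)$ with $m_k(0)-m_k(p_k)$, so the comparison is only meaningful if the model carries the constant $f_i(x_k)$. I will therefore read $m_k$ as $m_k(p)=f_i(x_k)+g_k^\top p+\tfrac12\|p\|^2$, which agrees with $m_k^{\text{quad}}$ for $H_k=I$. This differs from \eqref{subproblem 1} only by a constant, so $p_k$ and $r_k$ are unchanged. Then
\[
f_i(x_k+p_k)-m_k(p_k)=\bigl[f_i(x_k+p_k)-f_i(x_k)-\nabla f_i(x_k)^\top p_k\bigr]-\tfrac12\|p_k\|^2 ,
\]
using $g_k=\nabla f_i(x_k)$.

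Next I bound the bracketed term in absolute value. I write it as $\int_0^1(\nabla f_i(x_k+tp_k)-\nabla f_i(x_k))^\top p_k\,dt$ and apply the Lipschitz bound with constant $L_{f,i}^k\le L_{f,i}\le L_{f,\max}$. This gives the two-sided estimate
\[
\bigl|f_i(x_k+p_k)-f_i(x_k)-\nabla f_i(x_k)^\top p_k\bigr|\le \tfrac{L_{f,\max}}{2}\|p_k\|^2 .
\]
The triangle inequality then yields $|m_k(p_k)-f_i(x_k+p_k)|\le \tfrac12(L_{f,\max}+1)\|p_k\|^2$.

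The main obstacle is the constant. The natural bound is $\tfrac12(L_{f,\max}+1)$, and this is at most $L_{f,\max}$ exactly when $L_{f,\max}\ge 1$. I would handle this in one of two ways:
\begin{enumerate}
\item adopt the harmless normalization $L_{f,\max}\ge1$, since a Lipschitz constant can always be enlarged and Lemma~\ref{lem:range_ak_1} already implicitly takes $L_{f,i}^k>c_0$ with $c_0<1$ in a similar spirit; or
\item state the lemma with $M=\max\{L_{f,\max},1\}$, or with $M=\tfrac12(L_{f,\max}+1)$, which is all the subsequent radius lower-bound argument needs.
\end{enumerate}
In either case $M$ is independent of $k$ and of the sampled index, which is what the rest of the analysis uses.
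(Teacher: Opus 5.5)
Your proposal is correct and follows essentially the same route as the paper: an integral-form Taylor expansion of $f_i$, the Lipschitz bound $\tfrac{L_{f,\max}}{2}\|p_k\|^2$ on the remainder, and the triangle inequality against the model term $\tfrac12\|p_k\|^2$, with $m_k$ implicitly carrying the constant $f_i(x_k)$ exactly as you make explicit. The only difference is how the constant is handled: the paper silently bounds $\tfrac12\|p_k\|^2$ by $\tfrac{L_{f,\max}}{2}\|p_k\|^2$, i.e.\ it tacitly assumes $L_{f,\max}\ge 1$, which is the normalization you flag (without it the claim with $M=L_{f,\max}$ genuinely fails, e.g.\ for linear $f_i$ with $L_{f,\max}<\tfrac12$).
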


\begin{proof}
Since from Taylor's theorem we have that
\begin{equation*}
    f_{i}(x_k+p_k)=f_{i}(x_k)+g(x_k)^Tp_k+\int_{0}^{1} [g(x_k+tp_k)-g(x_k)]^Tp_k \, \mathrm{d}t,
\end{equation*}
for some $t\in(0,1)$, it follows from the definition of $m_k$ that
\begin{align*}
    |m_k(p_k)-f_{i}(x_k+p_k)|&=\left|\frac{1}{2}p_k^Tp_k-\int_{0}^{1} [g(x_k+tp_k)-g(x_k)]^Tp_k \,\mathrm{d}t\right|\\
    &\leq \frac{L_{f,\max}}{2}\lVert p_k\rVert^2+\frac{L_{f,i}}{2}\lVert p_k\rVert^2\\
    &=M\lVert p_k\rVert^2\\
    &\leq M\Delta_k^2.
\end{align*}

where $M=L_{f,\max}$ and $L_{f,\max} = \max_{i}L_{f,i}$.

\end{proof}

To facilitate the subsequent analysis of both Algorithm~\ref{alg:stochastic-tr} and Algorithm~\ref{alg:penalty-tr}, we first establish a general Cauchy decrease result that holds for any symmetric matrix $H_k$.

\begin{lemma}\label{lem:CauchyPoint}
    The Cauchy point $p_k$ satisfies
    \begin{equation*}
    g_k^{T}p_{k}+\frac{1}{2}p_{k}^{T}H_kp_{k}\leq-\frac{1}{2}\left\Vert g_k\right\Vert _{2}\min\left\{ \Delta_{k},\frac{\left\Vert g_k\right\Vert _{2}}{\left\Vert H_k\right\Vert _{2}}\right\}.
    \end{equation*}
\end{lemma}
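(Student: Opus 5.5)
This is the classical Cauchy decrease estimate (Nocedal--Wright, Lemma~4.3), and I would prove it the standard way. Define the Cauchy point as the minimizer of the model along the steepest-descent direction inside the trust region:
\[
p_k=-\tau_k\frac{\Delta_k}{\|g_k\|}g_k,\qquad \tau_k=\arg\min_{\tau\in[0,1]}\psi(\tau),
\]
where
\[
\psi(\tau):=m_k\!\left(-\tau\tfrac{\Delta_k}{\|g_k\|}g_k\right)=-\tau\Delta_k\|g_k\|+\tfrac12\tau^2\frac{\Delta_k^2}{\|g_k\|^2}\,g_k^TH_kg_k .
\]
For both algorithms in the paper, the closed-form step $p_k=-a_kg_k$ is exactly this point. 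In Algorithm~\ref{alg:penalty-tr}, $a_k=\|g_k\|^2/(g_k^TH_kg_k)$ when interior and $a_k=\Delta_k/\|g_k\|$ otherwise. In Algorithm~\ref{alg:stochastic-tr}, $H_k=I$. If $g_k=0$ both sides of the claimed inequality vanish, so I assume $g_k\neq 0$.

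The proof then splits into cases according to the sign and size of the curvature term $\kappa:=g_k^TH_kg_k$.

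\textbf{Case 1: $\kappa\le 0$.} Then $\psi$ is decreasing on $[0,1]$, so $\tau_k=1$. This gives
\[
\psi(1)\le -\Delta_k\|g_k\|\le -\tfrac12\|g_k\|\min\{\Delta_k,\|g_k\|/\|H_k\|\}.
\]

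\textbf{Case 2: $\kappa>0$ and $\|g_k\|^3/(\Delta_k\kappa)\le 1$.} The unconstrained minimizer of $\psi$ lies in $[0,1]$, namely $\tau_k=\|g_k\|^3/(\Delta_k\kappa)$. Substituting gives $\psi(\tau_k)=-\tfrac12\|g_k\|^4/\kappa$. Using $\kappa\le\|H_k\|\|g_k\|^2$, this is at most $-\tfrac12\|g_k\|^2/\|H_k\|$, which is bounded above by the right-hand side.

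\textbf{Case 3: $\kappa>0$ and $\|g_k\|^3>\Delta_k\kappa$.} Here $\tau_k=1$, and the condition gives $\kappa<\|g_k\|^3/\Delta_k$. Hence
\[
\psi(1)=-\Delta_k\|g_k\|+\tfrac12\frac{\Delta_k^2}{\|g_k\|^2}\kappa<-\Delta_k\|g_k\|+\tfrac12\Delta_k\|g_k\|=-\tfrac12\Delta_k\|g_k\|,
\]
which again is bounded above by the right-hand side.

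The only delicate point is bookkeeping rather than analysis. One must check that the paper's closed-form $a_k$ coincides with $\tau_k\Delta_k/\|g_k\|$ in each case: the interior formula corresponds to Case 2, and the boundary formula to Cases 1 and 3. One must also interpret $\|g_k\|/\|H_k\|$ as $+\infty$ when $H_k=0$, which cannot occur here since $H_k\succeq I$. Everything else is the routine computation above.
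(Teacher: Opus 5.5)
Your three-case argument is correct, and it is the classical Cauchy-decrease proof (Nocedal--Wright, Lemma~4.3). The paper states this lemma without writing out a proof and relies on exactly that standard argument. Your check that the paper's closed-form $a_k$ equals $\tau_k\Delta_k/\|g_k\|$ is also right. And if $p_k$ is instead read as the exact subproblem minimizer, as the $\arg\min$ in Algorithm~\ref{alg:penalty-tr} literally says (it is generally not parallel to $g_k$ when $H_k\neq I$), the bound still follows at once, since its model value is no larger than the Cauchy point's.
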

\begin{lemma}
    \label{lem:Ultimate Progress at Nonoptimal Points}
   In Algorithm \ref{alg:stochastic-tr}, if at iteration $k$ we have $g_k\neq 0$ and 
   \begin{equation*}
       \Delta_k\leq \frac{\lVert g_k\rVert}{rM}
   \end{equation*}
where $r=\frac{2}{1-c_2}$, then we have that $\Delta_{k+1}=\min(\nu\Delta_k,\bar{\Delta})$.
\end{lemma}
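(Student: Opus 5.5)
The argument follows the classical one from deterministic trust-region theory (e.g.\ Nocedal--Wright, Theorem~4.5-type argument). The goal is to show that $r_k>c_2$, since then the radius update in Algorithm~\ref{alg:stochastic-tr} gives $\Delta_{k+1}=\min(\nu\Delta_k,\bar{\Delta})$ (with $\nu=\nu_2$). I will bound $|r_k-1|$ from above by combining the model-accuracy estimate of Lemma~\ref{lem:penalty-model-accuracy} with the Cauchy decrease estimate of Lemma~\ref{lem:CauchyPoint} (with $H_k=I$, so $\|H_k\|=1$).

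First, since $m_k(0)=0$ in \eqref{subproblem 1}, I write
\[
|r_k-1|=\frac{\bigl|f_i(x_k)-f_i(x_k+p_k)-(m_k(0)-m_k(p_k))\bigr|}{m_k(0)-m_k(p_k)}.
\]
I interpret the model as including the constant $f_i(x_k)$, i.e.\ $m_k(p)=f_i(x_k)+g_k^\top p+\tfrac12\|p\|^2$, which is what Lemma~\ref{lem:penalty-model-accuracy} tacitly uses. Then the numerator equals $|m_k(p_k)-f_i(x_k+p_k)|\le M\|p_k\|^2\le M\Delta_k^2$ by Lemma~\ref{lem:penalty-model-accuracy}. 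For the denominator, Lemma~\ref{lem:CauchyPoint} gives
\[
m_k(0)-m_k(p_k)\ge \tfrac12\|g_k\|\min\{\Delta_k,\|g_k\|\}.
\]

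Next, I reduce the minimum to $\Delta_k$. The hypothesis gives $\Delta_k\le \|g_k\|/(rM)$ with $r=2/(1-c_2)>2$. So $\Delta_k\le\|g_k\|$ provided $rM\ge 1$, i.e.\ $M=L_{f,\max}\ge (1-c_2)/2$. This is automatically true under the condition $L_{f,i}^k>c_0$ already used in Lemma~\ref{lem:range_ak_1}, possibly after enlarging $M$ to $\max\{L_{f,\max},1\}$, which keeps Lemma~\ref{lem:penalty-model-accuracy} valid. Hence the denominator is at least $\tfrac12\|g_k\|\Delta_k$. Combining,
\[
|r_k-1|\le \frac{M\Delta_k^2}{\tfrac12\|g_k\|\Delta_k}=\frac{2M\Delta_k}{\|g_k\|}\le \frac{2}{r}=1-c_2,
\]
so $r_k\ge c_2$.

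The main obstacle is the boundary case: the update requires the strict inequality $r_k>c_2$, and the chain above only yields $r_k\ge c_2$. I would first try to extract strictness from one of the inequalities. In the case $\|g_k\|\le\Delta_k$, the Cauchy bound is not tight, since the actual decrease is $\tfrac12\|g_k\|^2$. In the case $\|g_k\|>\Delta_k$, the exact decrease is $\Delta_k\|g_k\|-\tfrac12\Delta_k^2>\tfrac12\|g_k\|\Delta_k$, which is strict because $g_k\ne0$. That strict denominator bound gives $|r_k-1|<1-c_2$. Failing that, I would note that equality $r_k=c_2$ falls in the ``otherwise'' branch, and slightly strengthen the constant $r$ or read the update rule with $\ge$. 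The case $g_k\neq0$ is also what ensures the denominator is positive, so $r_k$ is well defined.
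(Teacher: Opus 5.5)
Your argument is the paper's: bound the numerator of $|r_k-1|$ by $M\Delta_k^2$ via Lemma~\ref{lem:penalty-model-accuracy} and the denominator via Lemma~\ref{lem:CauchyPoint} with the minimum reduced to $\Delta_k$, then conclude $|r_k-1|\le 2/r=1-c_2$. The paper reduces the minimum using the unstated assumption $2M>\|H_k\|=1$. Your claim that $L_{f,i}^k>c_0$ already forces $rM\ge 1$ is false in general (e.g.\ $c_0=0.05$, $c_2=0.5$, $M=0.1$). So rely on your fallback $M=\max\{L_{f,\max},1\}$, which the paper's proof of Lemma~\ref{lem:penalty-model-accuracy} tacitly needs anyway, since it bounds $\tfrac12\|p_k\|^2$ by $\tfrac{L_{f,\max}}{2}\|p_k\|^2$.

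With $M\ge 1$ you get $\Delta_k<\|g_k\|$ strictly, so only the boundary step occurs. In the interior case the Cauchy bound would actually be tight, contrary to your remark. Your exact decrease $\Delta_k\|g_k\|-\tfrac12\Delta_k^2>\tfrac12\|g_k\|\Delta_k$ then gives the strict $r_k>c_2$ that the paper merely asserts: its two ``$<$'' steps are in fact an equality and a non-strict bound.
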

\begin{proof}
Since $r>2$, we have $rM>2M>\lVert H_k\rVert$, and thus 
\begin{equation*}
    \Delta_k<\lVert g_k\rVert /\lVert H_k\rVert,
\end{equation*}
which means
\begin{equation*}
    \min(\Delta_k,\lVert g_k\rVert /\lVert H_k\rVert)=\Delta_k.
\end{equation*}
Then we have that for all $\Delta_k\leq \frac{\lVert g_k\rVert}{rM}$,
\begin{align*}
    |r_k-1|&=\frac{|f_{i}(x_k+p_k)-m_k(p_k)|}{m_k(0)-m_k(p_k)}\\
    &\overset{(i)}{\leq} \frac{M\Delta_k^2}{\frac{1}{2}\lVert g_k \rVert  \cdot\min[\frac{\lVert g_k\rVert}{\lVert H_k\rVert},\Delta_k]}\\
    &\overset{(ii)}{<}\frac{M\Delta_k^2}{\frac{1}{2}\lVert g_k\rVert\Delta_k}\\
    &<\frac{M\frac{\lVert g_k\rVert}{rM}}{\frac{1}{2}\lVert g_k\rVert}\\
    &=\frac{2}{r}\\
    &=1-c_2,
\end{align*}
where $(i)$ is due to Lemma \ref{lem:CauchyPoint}, and $(ii)$ is from $\min(\Delta_k,\lVert g_k\rVert /\lVert H_k\rVert)=\Delta_k$.
This implies that $\mu_k>c_2$, and we have that $\Delta_{k+1}=\min(\nu\Delta_k,\bar{\Delta})$.
\end{proof}

\begin{corollary}
    \label{cor:Lower Bound on Trust Region Radius}
    If there exists a constant $\epsilon>0$ such that $\lVert g_k\rVert\geq \epsilon$ for all $k$, then
    \begin{equation*}
        \Delta_k\geq \frac{\epsilon}{4rM}
    \end{equation*}
\end{corollary}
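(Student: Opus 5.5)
The argument is a standard induction on $k$, driven by Lemma~\ref{lem:Ultimate Progress at Nonoptimal Points}. The idea is simple. Whenever $\Delta_k$ falls below the threshold $\|g_k\|/(rM)$, that lemma says the radius is not shrunk at that step; it is enlarged, or capped at $\bar{\Delta}$. So the radius can drop below the threshold only through a single contraction by $\nu_1$ from a value that was already above the threshold. Since $\|g_k\|\ge\epsilon$ for all $k$, the iteration-dependent threshold $\|g_k\|/(rM)$ may be replaced by the uniform quantity $\epsilon/(rM)$. Indeed, $\Delta_k\le \epsilon/(rM)$ implies $\Delta_k\le \|g_k\|/(rM)$, so the hypothesis of Lemma~\ref{lem:Ultimate Progress at Nonoptimal Points} holds whenever $\Delta_k\le\epsilon/(rM)$.

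I will prove by induction that $\Delta_k\ge \min\{\Delta_0,\ \epsilon/(\nu_1 rM)\}$ for every $k$. The base case $k=0$ is immediate.

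For the inductive step there are two cases.
\begin{itemize}
\item If $\Delta_k\le \epsilon/(rM)$, then Lemma~\ref{lem:Ultimate Progress at Nonoptimal Points} gives $\Delta_{k+1}=\min(\nu_2\Delta_k,\bar\Delta)$. We have $\nu_2\Delta_k\ge\Delta_k$, and $\bar\Delta\ge\Delta_k$ because the update rule never produces a radius above $\bar{\Delta}$ (here $\Delta_0<\bar\Delta$ is used). Hence $\Delta_{k+1}\ge\Delta_k$, and the bound is inherited.
\item If $\Delta_k>\epsilon/(rM)$, then in the worst case $\Delta_{k+1}=\Delta_k/\nu_1>\epsilon/(\nu_1 rM)$.
\end{itemize}
Either way the invariant propagates.

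It remains to match the constant in the statement, $\epsilon/(4rM)$. The bound $\epsilon/(\nu_1 rM)\ge\epsilon/(4rM)$ holds provided $\nu_1\le 4$, which is the implicit parameter choice (the paper's Lemma~\ref{lem:Ultimate Progress at Nonoptimal Points} writes a single $\nu$, so I will treat $\nu_1\le4$ as the intended setting). The initial radius also has to be handled: either $\Delta_0\ge\epsilon/(4rM)$, which I would state as the requirement that $\epsilon$ be small relative to $\Delta_0$, or the minimum with $\Delta_0$ is carried along.

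The main obstacle is the bookkeeping of these constants rather than any real mathematics, namely reconciling the factor $4$ with the shrink factor $\nu_1$ and with $\Delta_0$. I would first try to absorb both into the stated assumption by noting $\nu_1\le 4$ and taking $\epsilon\le 4rM\Delta_0$. Otherwise I would state the bound as $\min\{\Delta_0,\epsilon/(\nu_1 rM)\}$ and remark that it reduces to the corollary's constant under those parameter choices.
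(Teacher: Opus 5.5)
Your argument is correct and follows the paper's route. The paper's one-line proof invokes Lemma~\ref{lem:Ultimate Progress at Nonoptimal Points} (the radius is enlarged whenever $\Delta_k\le\lVert g_k\rVert/(rM)$) and concludes that the radius never falls below $\epsilon/(4rM)$; your induction on $\min\{\Delta_0,\epsilon/(\nu_1 rM)\}$ makes this rigorous, and both conditions you flag are genuinely needed but left implicit in the paper: $\nu_1\le 4$, and $\Delta_0\ge\epsilon/(4rM)$ (without which the stated bound already fails at $k=0$).
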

\begin{proof}
    From Lemma \ref{lem:Ultimate Progress at Nonoptimal Points}, if $\Delta_k\leq \frac{\lVert g_k\rVert}{rM}$, the trust region radius will be increased. Thus, the trust region radius can never be reduced below $\frac{\epsilon}{4rM}$.
\end{proof}

Next lemma show that the ratio of unsuccessful iterations to successful ones remains bounded. Define
\begin{equation*}
    S=\{k\in \mathbb{N}|r_k>c_0\}, \quad U=\{k\in \mathbb{N}|r_k\leq c_0\}.
\end{equation*}

\begin{lemma}
    \label{the radio of unsuccessful iterations to successful ones}
    Let \(\#S_K\) and \(\#U_K\) denote the number of successful and unsuccessful iterations within the first \(K\) iterations, respectively. If there exists a constant $\epsilon>0$ such that $\lVert g_k\rVert\geq \epsilon$ for all $k\in [K]$, then  
\[
\frac{\# U_K}{\# S_K} \leq log\frac{4rM\bar{\Delta}}{\epsilon}/log\nu.
\]  
\end{lemma}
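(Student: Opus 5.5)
The argument is the standard radius-bookkeeping one: follow $\log\Delta_k$ across the first $K$ iterations. It can fall below a fixed floor at most by a bounded amount, it can rise above $\bar{\Delta}$ not at all, and it moves by a fixed multiplicative factor on every iteration.

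\emph{Per-iteration effect on the radius.} I read the update with a single factor $\nu$ (take $\nu=\nu_1=\nu_2$, as in Lemma~\ref{lem:Ultimate Progress at Nonoptimal Points}). Since $c_0\le c_1$, every unsuccessful iteration $k\in U$ has $r_k\le c_0\le c_1$. I treat $r_k=c_1$ as a shrinking case, as the bookkeeping convention requires, so that $\Delta_{k+1}=\Delta_k/\nu$ and $\log\Delta$ drops by exactly $\log\nu$. On a successful iteration the radius is multiplied by at most $\nu$ and is capped at $\bar{\Delta}$. For the counting I will use the simplification that each successful iteration raises $\log\Delta$ by at most $\log\nu$. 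The same simplification lets me attribute to each successful iteration an ``expansion budget'' that unsuccessful iterations must consume.

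\emph{Telescoping.} Summing the changes over $k=0,\dots,K-1$ gives
\[
\log\Delta_K-\log\Delta_0 \le \#S_K\log\nu-\#U_K\log\nu .
\]
Corollary~\ref{cor:Lower Bound on Trust Region Radius} gives $\Delta_K\ge \epsilon/(4rM)$, and $\Delta_0<\bar{\Delta}$. Together these yield
\[
\#U_K\log\nu \le \#S_K\log\nu+\log\frac{4rM\bar{\Delta}}{\epsilon}.
\]
This telescoped bound only gives $\#U_K\le\#S_K+\log(4rM\bar\Delta/\epsilon)/\log\nu$. To reach the stated ratio I would refine the counting by blocks. Split the iterations into maximal runs of consecutive unsuccessful iterations, each run preceded by a successful one (the first run may follow $k=0$). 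Within a run the radius decreases monotonically from at most $\bar{\Delta}$. By Lemma~\ref{lem:Ultimate Progress at Nonoptimal Points} it cannot be shrunk once it falls below $\|g_k\|/(rM)$, and $\|g_k\|\ge\epsilon$. So a run contains at most $\log(4rM\bar{\Delta}/\epsilon)/\log\nu$ shrinks, since any further shrink would push $\Delta$ below the floor of Corollary~\ref{cor:Lower Bound on Trust Region Radius}. The number of runs is at most $\#S_K$, which gives $\#U_K\le \#S_K\log(4rM\bar{\Delta}/\epsilon)/\log\nu$, i.e.\ the claim.

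\emph{Main obstacle.} The hard part is the per-run bound: the floor must apply to the radius at every shrink, not just at the final iterate, and the inequality $\bar{\Delta}/\nu^{j}\ge\epsilon/(4rM)$ must be turned into an integer bound on the run length $j$. I would first try to get this directly from Corollary~\ref{cor:Lower Bound on Trust Region Radius} applied at every $k\le K$. Any off-by-one from the first run that precedes any success would be absorbed into the logarithmic constant, assuming $\#S_K\ge1$.
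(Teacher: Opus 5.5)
Your run-by-run count is the paper's own proof: after a success the radius is at most $\bar\Delta$, each unsuccessful step divides it by $\nu$, and the floor $\epsilon/(4rM)$ of Corollary~\ref{cor:Lower Bound on Trust Region Radius} caps every run of consecutive failures at $\log(4rM\bar\Delta/\epsilon)/\log\nu$ steps. The off-by-one you flag (a run before the first success) is glossed over in the paper too, and it strictly yields only $\#U_K\le(\#S_K+1)\log(4rM\bar\Delta/\epsilon)/\log\nu$, so ``absorbing'' it costs up to a factor $2$ rather than nothing; incidentally, the telescoped bound you set aside, $\#U_K\le\#S_K+\log(4rM\bar\Delta/\epsilon)/\log\nu$, is the sharper of the two and already implies the stated ratio whenever $\#S_K\ge2$ and $\log(4rM\bar\Delta/\epsilon)\ge2\log\nu$.
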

\begin{proof}
Consider the scenario following a successful iteration. Suppose that after this successful iteration, the radius is updated to \(\Delta_{k+1} = \hat{\Delta}\). Then, there may follow a series of unsuccessful iterations.

From Algorithm \ref{alg:stochastic-tr}, we know that the radius of the trust region during these iterations cannot exceed \(\bar{\Delta}\), and the successful iteration ensures that the radius is at least \(\hat{\Delta}\). Therefore, after \(m\) unsuccessful iterations, we can derive the following inequality by applying Corollary \ref{cor:Lower Bound on Trust Region Radius}:

\[
\left(\frac{1}{\nu}\right)^m \hat{\Delta} \geq \frac{\epsilon}{4rM},
\]
which simplifies to:
\[
m \leq \frac{\log \frac{4rM\hat{\Delta}}{\epsilon}}{\log \nu}.
\]
Since \(\hat{\Delta}\) is bounded above by \(\bar{\Delta}\), we obtain the upper bound:
\[
m \leq \frac{\log \frac{4rM\bar{\Delta}}{\epsilon}}{\log \nu}.
\]

Thus, after at most 
$\log\frac{4rM\bar{\Delta}}{\epsilon}/\log\nu$
consecutive unsuccessful iterations, there must be a successful iteration.
Therefore, we can bound the ratio of unsuccessful iterations to successful iterations as follows:
\[
\frac{\# U_K}{\# S_K} \leq \frac{\log \frac{4rM\bar{\Delta}}{\epsilon}}{\log \nu}.
\]
\end{proof}

This lemma states that as the number of iterations grows, the number of unsuccessful iterations relative to successful ones remains bounded by a constant ratio. 

We now analyze the convergence rate of the first-order stochastic trust region method for non-convex objectives.
We show that the method achieves an iteration complexity of $O(\epsilon^{-2}\log(1/\epsilon))$, provided that the step parameter $a_k$ is restricted to the interva $a_{k}\in\left[a_{\min},a_{\max}\right]$.
Here, $a_{\min}=\frac{2\left(1- c_0\right)}{L_{\max}- c_0}$ 
is given in Lemma~\ref{lem:range_ak_1}, while $a_{\max}$ will be specified later.

\begin{theorem}\label{thm:first_order}
    Suppose that Assumption \ref{assu:unbiased_uc}, Assumption \ref{assu:smooth_uc} and Assumption \ref{assu:sgc_uc} hold. Algorithm \ref{alg:stochastic-tr} achieves the rate:
    \[
\min_{k \in [K-1]} \mathbb{E} \left[ \|\nabla f(x_k)\|^2 \right] \leq \frac{2\delta}{K} \left( 1 + \frac{\log \frac{4rM\bar{\Delta}}{\epsilon}}{\log \nu} \right) \mathbb{E} [f(x_0) - f(x^*)].
\]
    where $\delta=1/(-\rho La_{\max}^{2}+\left(1-\rho\right)a_{\max}+\left(1+\rho\right)\frac{2\left(1- c_0\right)}{L_{\max}- c_0}).$ Thus, for some $K=O(\frac{1}{\epsilon}\log\frac{1}{\epsilon})$ one finds that
\[
\min_{k \in [K-1]} \mathbb{E} \left[ \|\nabla f(x_k)\|^2 \right] \leq \epsilon.
\]
\end{theorem}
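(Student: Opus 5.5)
The plan is the standard descent-lemma argument for constant-step SGD under the strong growth condition. The step size is replaced by the random but bounded trust-region multiplier $a_k\in[a_{\min},a_{\max}]$. Lemma~\ref{the radio of unsuccessful iterations to successful ones} then charges the unsuccessful iterations, which is where the $\log(1/\epsilon)$ factor enters.

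\textbf{One-step descent on successful iterations.} Fix $k\in S$, so $x_{k+1}=x_k-a_k g_k$. Apply the $L$-smoothness of $f$ (Assumption~\ref{assu:smooth_uc}):
\[
f(x_{k+1})\le f(x_k)-a_k\langle\nabla f(x_k),g_k\rangle+\tfrac{L}{2}a_k^2\|g_k\|^2 .
\]
If $a_k$ were independent of $g_k$, taking $\mathbb{E}_k$ and using Assumption~\ref{assu:unbiased_uc} would give a clean bound. But $a_k$ depends on the sample, so I will not use independence. Instead I will split the cross term with the identity
\[
-\langle\nabla f,g_k\rangle=\tfrac12\|\nabla f-g_k\|^2-\tfrac12\|\nabla f\|^2-\tfrac12\|g_k\|^2 .
\]
I then use the deterministic bounds $a_{\min}\le a_k\le a_{\max}$ on each signed term separately: $a_{\min}$ on the negative terms and $a_{\max}$ on the positive ones.

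Next I take expectations. The variance term satisfies $\mathbb{E}_k\|\nabla f-g_k\|^2=\mathbb{E}_k\|g_k\|^2-\|\nabla f\|^2\le(\rho-1)\|\nabla f\|^2$. The term $\|g_k\|^2$ is bounded by $\rho\|\nabla f\|^2$ via Assumption~\ref{assu:sgc_uc}. Collecting coefficients should give
\[
\mathbb{E}_k[f(x_{k+1})]\le f(x_k)-\tfrac{1}{2\delta}\|\nabla f(x_k)\|^2 ,
\]
with $1/\delta=-\rho L a_{\max}^2+(1-\rho)a_{\max}+(1+\rho)a_{\min}$. I will adjust the bookkeeping (e.g.\ the factor on $L a_{\max}^2$) until it matches the stated $\delta$. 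The role of $a_{\max}$ is that it is chosen small enough that $\delta>0$. This is possible whenever $(1+\rho)a_{\min}$ dominates; if needed I will impose $a_{\max}$ via $\bar\Delta$ as the paper indicates.

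\textbf{Unsuccessful iterations and telescoping.} On $k\in U$ we have $x_{k+1}=x_k$, so $f$ is unchanged. Summing the one-step inequality over $k\in S_K$, telescoping, and using $f\ge f(x^*)$ gives
\[
\sum_{k\in S_K}\mathbb{E}\|\nabla f(x_k)\|^2\le 2\delta\,\mathbb{E}[f(x_0)-f(x^*)] .
\]
Then the minimum over all $k<K$ is at most the average over successful iterations, which is bounded by $2\delta\,\mathbb{E}[f(x_0)-f(x^*)]/\#S_K$. Lemma~\ref{the radio of unsuccessful iterations to successful ones} gives $K=\#S_K+\#U_K\le \#S_K\bigl(1+\log\frac{4rM\bar\Delta}{\epsilon}/\log\nu\bigr)$. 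Inverting this yields the displayed rate. Setting the right-hand side to be at most $\epsilon$ gives $K=O(\epsilon^{-1}\log(1/\epsilon))$ in terms of the squared gradient, i.e.\ $O(\epsilon^{-2}\log(1/\epsilon))$ for $\|\nabla f\|\le\epsilon$.

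\textbf{Main obstacle.} The delicate point is justifying the use of Lemma~\ref{the radio of unsuccessful iterations to successful ones} and Corollary~\ref{cor:Lower Bound on Trust Region Radius}. Both require $\|g_k\|\ge\epsilon$ for all $k\le K$, whereas the conclusion is about $\mathbb{E}\|\nabla f(x_k)\|^2$. I would argue by contradiction or stopping: if $\|g_k\|<\epsilon$ at some iteration, then (via the strong growth condition in expectation) we are already near stationarity and can stop. Otherwise the hypothesis of the lemma holds along the trajectory.

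A second subtlety is that on successful iterations the conditioning on $k\in S$ correlates with $g_k$. To handle this, I would write the descent as $\mathbf 1_{k\in S}$ times the bound. I would then note that the per-iteration inequality holds pathwise once $a_k$ is replaced by its deterministic bounds, so the expectation step only needs the strong growth condition applied to $\|g_k\|^2$.
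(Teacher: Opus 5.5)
Your skeleton is the paper's: smoothness of $f$, the polarization identity, $a_{\min}$ on the negative terms and $a_{\max}$ on the positive ones, unbiasedness plus strong growth, telescoping over $S$, then Lemma~\ref{the radio of unsuccessful iterations to successful ones}. No bookkeeping adjustment is needed. The coefficients give the stated $\delta$ exactly, provided Assumption~\ref{assu:sgc_uc} is applied once to the full coefficient $a_{\max}-a_{\min}+La_{\max}^2\ge 0$ of $\mathbb{E}_k\|g_k\|^2$ after the variance identity, since $La_{\max}^2-a_{\min}$ alone may be negative.

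\textbf{The expectation step.} The real gap is here, and your indicator device does not close it. Acceptance $r_k>c_0$ is decided by the same index $i$ that defines $g_k$. With $\mathbf{1}_{k\in S}$ inside $\mathbb{E}_k$ you may drop it from the nonnegative terms. But the variance identity no longer produces $-a_{\max}\|\nabla f(x_k)\|^2$, because in general $\mathbb{E}_k[\mathbf{1}_{k\in S}g_k]\neq\mathbb{P}_k(k\in S)\nabla f(x_k)$. What the indicator version actually gives is the coefficient $(\rho-1)a_{\max}-\mathbb{P}_k(k\in S)\,a_{\min}+\rho\max\{La_{\max}^2-a_{\min},0\}$ in front of $\|\nabla f(x_k)\|^2$. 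This is negative only if $\mathbb{P}_k(k\in S)\,a_{\min}$ exceeds $(\rho-1)a_{\max}+\rho\max\{La_{\max}^2-a_{\min},0\}$. That is a lower bound on the acceptance probability, which nothing in the assumptions provides; the ratio lemma is a pathwise count, not a probability bound. Moreover, a test based on $f_i$ can accept steps that increase $f$. The paper's proof has the same hole: it takes expectations over $g_k$ ``for any successful iteration'' as if acceptance were independent of the sample. You match the paper only under that same tacit assumption, and you should state it rather than claim the indicator supplies it.

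\textbf{The other remedies.} Your stopping argument for the hypothesis $\|g_k\|\ge\epsilon$ of Corollary~\ref{cor:Lower Bound on Trust Region Radius} and Lemma~\ref{the radio of unsuccessful iterations to successful ones} also fails.
\begin{itemize}
\item One small realization $\|\nabla f_i(x_k)\|$ says nothing about $\|\nabla f(x_k)\|$. Strong growth bounds $\mathbb{E}_k\|g_k\|^2$ only from above, and Jensen's $\|\nabla f(x_k)\|^2\le\mathbb{E}_k\|g_k\|^2$ needs all components small, not the sampled one.
\item Under interpolation a single $f_i$ can be stationary where $f$ is not. There $g_k=0$ and $r_k$ is undefined.
\item A data-dependent stop would, in any case, change a statement about a fixed $K$.
\end{itemize}
The paper invokes the lemma without checking this hypothesis.

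The same kind of pathwise lower bound on $\|g_k\|$ is what you would need to enforce $a_k\le a_{\max}<1$ through $\bar\Delta$, because $a_k=1$ whenever $\|g_k\|\le\Delta_k$. The paper states $a_k\le a_{\max}$ as a proviso rather than deriving it.

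Nor is $a_k\ge a_{\min}$ secure. The proof of Lemma~\ref{lem:range_ak_1} compares two upper bounds on $f_i(x_{k+1})$, which yields no inequality between them. Smoothness in fact gives the reverse implication: every $a_k<2(1-c_0)/(L_{f,i}-c_0)$ passes the test. So short accepted steps occur whenever $\Delta_k$ is small.

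\textbf{The final step.} ``$\min_{k<K}\le$ average over $S_K$'' holds pathwise. But $S_K$ is random, so $\sum_{k\in S_K}\mathbb{E}[\cdot]$ is not well defined, and after taking expectations you control only $\mathbb{E}[\min_{k<K}\|\nabla f(x_k)\|^2]$. That is weaker than the stated $\min_{k<K}\mathbb{E}\|\nabla f(x_k)\|^2$.

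To get the stated form, bound the full average instead. On $U$ we have $x_{k+1}=x_k$, so each unsuccessful iterate equals the next successful one. Unsuccessful runs have length at most $\bar m:=\log(4rM\bar\Delta/\epsilon)/\log\nu$. Hence, pathwise, $\sum_{k<K}\|\nabla f(x_k)\|^2\le(1+\bar m)\sum_{k\in S,\,k<K+\bar m}\|\nabla f(x_k)\|^2$. Combined with the per-step descent, this yields the paper's bound on $\frac1K\sum_k\mathbb{E}\|\nabla f(x_k)\|^2$.
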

\begin{proof}
Starting from $L$-smoothness of $f$:
\begin{align*}
f\left(x_{k+1}\right)-f\left(x_{k}\right) & \leq\left\langle \nabla f\left(x_{k}\right),x_{k+1}-x_{k}\right\rangle +\frac{L_f}{2}\left\Vert x_{k+1}-x_{k}\right\Vert ^{2}\\
 & =-a_{k}\left\langle \nabla f\left(x_{k}\right),g_k\right\rangle +\frac{L_fa_{k}^{2}}{2}\left\Vert g_k\right\Vert ^{2}\\
 & =\frac{a_{k}}{2}\left(\left\Vert \nabla f\left(x_{k}\right)-g_k\right\Vert ^{2}-\left\Vert \nabla f\left(x_{k}\right)\right\Vert ^{2}-\left\Vert g_k\right\Vert ^{2}\right)+\frac{L_fa_{k}^{2}}{2}\left\Vert g_k\right\Vert ^{2}\\
\Longrightarrow2\left(f\left(x_{k+1}\right)-f\left(x_{k}\right)\right) & \leq a_{k}\left\Vert \nabla f\left(x_{k}\right)-g_k\right\Vert ^{2}-a_{k}\left(\left\Vert \nabla f\left(x_{k}\right)\right\Vert ^{2}+\left\Vert g_k\right\Vert ^{2}\right)+L_fa_{k}^{2}\left\Vert g_k\right\Vert ^{2}.
\end{align*}
Let $L_{\max}=\max_i L_i$. Then, if the $k+1$-th iteration is unsuccessful ($r_{k} \leq c_{0}$), we denote that $a_{k} = 0$. Conversely, if the $k+1$-th iteration is successful ($r_{k} > c_{0}$),
Lemma \ref{lem:range_ak_1} guarantees that $a_{\min}=\min\{\frac{2\left(1- c_0\right)}{L_{\max}- c_0},a_{\max}\}\leq \min\{\frac{2\left(1- c_0\right)}{L_{ik}- c_0},a_{\max}\}\leq a_k\leq a_{\max}$. Thus, for any successful iteration $k+1$, using this property and taking expectations with respect to $g_k$,
\begin{align*}
2\mathbb{E}\left[f\left(x_{k+1}\right)-f\left(x_{k}\right)\right]\leq & a_{\max}\mathbb{E}\left[\left\Vert \nabla f\left(x_{k}\right)-g_k\right\Vert ^{2}\right]-a_{\min}\left\Vert \nabla f\left(x_{k}\right)\right\Vert ^{2}-a_{\min}\mathbb{E}\left[\left\Vert g_k\right\Vert ^{2}\right]\\
 & +L_fa_{\max}^{2}\mathbb{E}\left[\left\Vert g_k\right\Vert ^{2}\right]\\
= & \left(a_{\max}-a_{\min}+L_fa_{\max}^{2}\right)\mathbb{E}\left[\left\Vert g_k\right\Vert ^{2}\right]-\left(a_{\max}+a_{\min}\right)\left\Vert \nabla f\left(x_{k}\right)\right\Vert ^{2}\\
\leq & \left(a_{\max}-a_{\min}+L_fa_{\max}^{2}\right)\rho\left\Vert \nabla f\left(x_{k}\right)\right\Vert ^{2}-\left(a_{\max}+a_{\min}\right)\left\Vert \nabla f\left(x_{k}\right)\right\Vert ^{2}\\
= & \left(\left(a_{\max}-a_{\min}+L_fa_{\max}^{2}\right)\rho-\left(a_{\max}+a_{\min}\right)\right)\left\Vert \nabla f\left(x_{k}\right)\right\Vert ^{2}.
\end{align*}
Suppose $\delta^{-1}=\left(a_{\max}+a_{\min}\right)-\left(a_{\max}-a_{\min}+L_fa_{\max}^{2}\right)\rho>0$, the gradient $\left\Vert \nabla f\left(x_{k}\right)\right\Vert ^{2}$ can be bounded as
\[
\left\Vert \nabla f\left(x_{k}\right)\right\Vert ^{2}\leq 2\delta\mathbb{E}\left[f\left(x_{k}\right)-f\left(x_{k+1}\right)\right].
\]
Note that for all the iteration that not successful, we have
$f(x_k) - f(x_{k+1}) = 0 $, $\forall k \in U_{K}$ due to $a_k=0$. Thus, for all the successful iterations, denoted as $S_k$, we have
\[
\frac{1}{K} \sum_{k=0}^{K-1} \|\nabla f(x_k)\|^2 \leq\frac{2\delta}{\# S_K} \sum_{k \in S} f(x_k) - f(x_{k+1}).
\]
Taking expectations,
\[
\frac{1}{K} \sum_{k=0}^{K-1} \mathbb{E} \left[ \|\nabla f(x_k)\|^2 \right] \leq\frac{2\delta}{\# S_K} \sum_{k \in S} \mathbb{E} [f(x_k) - f(x_{k+1})].
\]

From lemma \ref{the radio of unsuccessful iterations to successful ones}, we derive a lower bound on the number of successful iterations:

\[
\# S_K \geq \frac{K}{1 + \frac{\log \frac{4rM\bar{\Delta}}{\epsilon}}{\log \nu}}.
\]

Substituting \( \# S_K \geq \frac{K}{1 + \frac{\log \frac{4rM\bar{\Delta}}{\epsilon}}{\log \nu}} \):

\[
\frac{1}{K} \sum_{k=0}^{K-1} \mathbb{E} \left[ \|\nabla f(x_k)\|^2 \right] \leq \frac{2\delta}{K} \left( 1 + \frac{\log \frac{4rM\bar{\Delta}}{\epsilon}}{\log \nu} \right) \mathbb{E} [f(x_0) - f(x^*)].
\]
Thus,
\[
\min_{k \in [K-1]} \mathbb{E} \left[ \|\nabla f(x_k)\|^2 \right] \leq \frac{2\delta}{K} \left( 1 + \frac{\log \frac{4rM\bar{\Delta}}{\epsilon}}{\log \nu} \right) \mathbb{E} [f(x_0) - f(x^*)].
\]
For some $K=O(\frac{1}{\epsilon}\log\frac{1}{\epsilon})$ one finds that
\[
\min_{k \in [K-1]} \mathbb{E} \left[ \|\nabla f(x_k)\|^2 \right] \leq \epsilon.
\]
It remains to choose $a_{\max}$ such that $\delta>0$ holds. When
$a_{\max}\leq\frac{2\left(1- c_0\right)}{L_{\max}- c_0}$, we have $a_{\min}=a_{\max}=\frac{2\left(1- c_0\right)}{L_{\max}- c_0}$
and

\begin{align*}
\frac{1}{\delta} & =\left(a_{\max}+a_{\min}\right)-\left(a_{\max}-a_{\min}+L_fa_{\max}^{2}\right)\\
 & =2a_{\max}-L_f\rho a_{\max}^{2}>0\\
\Longrightarrow a_{\max} & <\frac{2}{L_f\rho}.
\end{align*}

When $a_{\max}>\frac{2\left(1- c_0\right)}{L_{\max}- c_0}$, we have

\begin{align*}
\frac{1}{\delta} & =\left(a_{\max}+\frac{2\left(1- c_0\right)}{L_{\max}- c_0}\right)-\left(a_{\max}-\frac{2\left(1- c_0\right)}{L_{\max}- c_0}+L_fa_{\max}^{2}\right)\rho\\
 & =-\rho L_fa_{\max}^{2}+\left(1-\rho\right)a_{\max}+\left(1+\rho\right)\frac{2\left(1- c_0\right)}{L_{\max}- c_0}.
\end{align*}

This is a concave quadratic in $a_{\max}$ and is strictly positive
when
\[
a_{\max}\in\left(0,\frac{\left(1-\rho\right)+\sqrt{\left(1-\rho\right)^{2}+4\rho L_f\left(1+\rho\right)\frac{2\left(1- c_0\right)}{L_{\max}- c_0}}}{2\rho L_f}\right).
\]
To avoid contradiction with the case assumption $\frac{2\left(1- c_0\right)}{L_{\max}- c_0}<a_{\max}$,
we require
\begin{align*}
\frac{\left(1-\rho\right)+\sqrt{\left(1-\rho\right)^{2}+4\rho L_f\left(1+\rho\right)\frac{2\left(1- c_0\right)}{L_{\max}- c_0}}}{2\rho L_f} & >\frac{2\left(1- c_0\right)}{L_{\max}- c_0}\\
\Longrightarrow\frac{1- c_0}{L_{\max}- c_0} & <\frac{1}{\rho L_f}.
\end{align*}
Thus, by choosing $ c_0$ such that $\frac{1- c_0}{L_{\max}- c_0}<\frac{1}{\rho L_f},$
we have a similar requirement for $a_{\max}$,
\begin{align*}
a_{\max} & <\frac{\left(1-\rho\right)+\sqrt{\left(1-\rho\right)^{2}+4\rho L_f\left(1+\rho\right)\frac{2\left(1- c_0\right)}{L_{\max}- c_0}}}{2\rho L_f}\\
 & <\frac{\left(1-\rho\right)+\sqrt{\left(1-\rho\right)^{2}+8\left(1+\rho\right)}}{2\rho L_f}\\
 & =\frac{1-\rho+3+\rho}{2\rho L_f}=\frac{2}{\rho L_f}.
\end{align*}
\end{proof}

\subsection{Convergence Analysis for the Equality-Constrained Problem}
We now analyze Algorithm~\ref{alg:penalty-tr} for solving the equality-constrained problem. 
The algorithm adopts the same stochastic trust-region framework as 
Algorithm~\ref{alg:stochastic-tr}, but constructs the quadratic model using the penalized objective
\[
\phi_i(x) = f_i(x) + \frac{\mu}{2}\|c(x)\|^2 .
\]

Because of this structural similarity, most analytical results derived for the unconstrained case 
remain valid after replacing $f_i$ with $\phi_i$. In particular, the Cauchy decrease result 
(Lemma~\ref{lem:CauchyPoint}), the lower bound on the trust-region radius, and the bound on the ratio 
between unsuccessful and successful iterations continue to hold with only minor modifications. 
For brevity, we omit the detailed adjustments and focus on the additional assumptions and arguments 
required by the penalty formulation. Throughout this subsection we assume without loss of generality that $\varepsilon \le 1$.

Similar to the unconstrained case, we assume that the stochastic gradient estimator is unbiased and that the objective is sufficiently smooth.

\begin{assumption}
\label{assu:unbiased_con}
For all $k \in \mathbb{N}$, the stochastic gradient estimator $g_k$ satisfies:  $$\mathbb{E}_k[g_k] = \nabla \phi(x_k).$$
\end{assumption}

\begin{assumption}
\label{assu:smooth_con}
Each function $f_i : \mathbb{R}^d \to \mathbb{R}$ and the constraint mapping
$c : \mathbb{R}^d \to \mathbb{R}^m$ are Lipschitz continuously differentiable, i.e.,
\[
\|\nabla f_i(x) - \nabla f_i(y)\| \le L_{f,i}\|x-y\|,
\qquad
\|\nabla c(x)-\nabla c(y)\| \le L_c\|x-y\|.
\]
\end{assumption}
Under this assumption, the penalized function
\[
\phi_i(x)=f_i(x)+\frac{\mu}{2}\|c(x)\|^2
\]
is also Lipschitz continuously differentiable with constant $L_{\phi,i}$.
Moreover, the averaged penalized function $\phi$ satisfies
\[
\|\nabla\phi(x)-\nabla\phi(y)\|\le L_\phi\|x-y\|.
\]
At iteration $k$, the corresponding local constants satisfy
\[
L_{f,i}^k\le L_{f,i}, \qquad L_{\phi,i}^k\le L_{\phi,i}.
\]

To simplify the analysis, we assume that the sequence of iterates remains in a bounded region.

\begin{assumption}
\label{assu:compact}
    The sequence $\{x_k\}$ generated by the algorithm is contained in a compact set $\mathcal{X} \subset \mathbb{R}^d$; that is, there exists a bounded closed set $\mathcal{X}$ such that
\[
x_k \in \mathcal{X}, \quad \forall k \ge 0.
\]
\end{assumption}
To ensure regularity of the feasible set, we impose the linear independence constraint qualification.
\begin{assumption}
\label{assu:LICQ}
The linear independence constraint qualification (LICQ) holds at any feasible point $x$ of
problem~\eqref{eq:constraint}. That is, the Jacobian $J(x)$ has full row rank for all $x$ satisfying
$c(x)=0$, and
\[
\sigma_{\min}(J(x)) \ge \sigma_{\min} > 0.
\]
\end{assumption}

Assumption~\ref{assu:LICQ} imposes the linear independence
constraint qualification (LICQ) at all feasible points.
This condition ensures regularity of the feasible set and well-posedness
of the associated Lagrange multipliers, which is essential for the
analysis of equality-constrained trust-region methods.

Finally, we extend the strong growth condition to the stochastic min--max formulation arising from the
equality-constrained problem. Under mild regularity conditions, problem~\eqref{eq:constraint} is
equivalent to the following unconstrained min--max problem:
\[
\min_{x\in\mathbb{R}^d}\max_{\lambda\in\mathbb{R}^m} L(x,\lambda),
\]
where $\lambda \in \mathbb{R}^m$ denotes the Lagrange multiplier associated with the constraints, and $L(x,\lambda)=f(x)+\lambda^\top c(x)$ is the Lagrangian function.

In the stochastic setting, the objective admits the representation
$f(x)=\mathbb{E}_{\xi\sim P}[f(x;\xi)]$, where $\xi$ denotes a random sample drawn from the distribution $P$. And the Lagrangian can be written as
\[
L(x,\lambda)=\mathbb{E}_{\xi\sim P}[L(x,\lambda;\xi)],
\qquad
L(x,\lambda;\xi)=f(x;\xi)+\lambda^\top c(x).
\]

\begin{assumption}
\label{assu:sgc_con}
There exists a constant $\tau > 1$ such that, for any $\lambda \in \mathbb{R}^m$,
\[
\mathbb{E}_\xi \bigl[ \|\nabla_x L(x,\lambda;\xi)\|^2 \bigr]
\le
\tau \|\nabla_x L(x,\lambda)\|^2.
\]
\end{assumption}

Assumption~\ref{assu:sgc_con} extends the strong growth condition to the stochastic Lagrangian in the min--max formulation, controlling the variance of the stochastic gradients with respect to the primal variable, and coincides with the strong growth condition considered in \cite{wang2023zeroth}.

Under the above assumptions, we next characterize the range of the parameter $a_k$ generated by Algorithm~\ref{alg:penalty-tr}.

\begin{lemma}
\label{lem:range_ak_2}
Suppose that Assumptions~\ref{assu:smooth_con} and 
\ref{assu:compact} hold, and let 
$\mu = 1/\varepsilon$ for some small $\varepsilon>0$.
Let
\[
C_c := \sup_{x\in\mathcal X} \|\nabla c(x)\| < \infty
\]
and define
\[
e := \max\{L_{\phi,i},\, 1+\mu C_c^2\}.
\]
Then the parameter $a_k$ generated by 
Algorithm~\ref{alg:penalty-tr} satisfies
\[
a_k
\ge
\frac{2(1-c_0)}
{\,e-c_0(1+\mu C_c^2)\,}.
\]
\end{lemma}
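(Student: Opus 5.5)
The plan mirrors Lemma~\ref{lem:range_ak_1}: play the smoothness upper bound on the decrease of $\varphi_i$ against the lower bound imposed by the acceptance test $r_k>c_0$. The new feature is the curvature term $\tfrac12 p^\top H_k p$ in the model, which now depends on $\mu$. We control it with $C_c$ on the compact set $\mathcal X$ from Assumption~\ref{assu:compact}.

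\textbf{Step 1 (smoothness).} By Assumption~\ref{assu:smooth_con}, $\varphi_i$ is $L^k_{\phi,i}$-smooth along the step. With $p_k=-a_kg_k$ and $\nabla\varphi_i(x_k)=g_k$,
\[
\varphi_i(x_k+p_k)\le \varphi_i(x_k)-a_k\Bigl(1-\tfrac{L_{\phi,i}a_k}{2}\Bigr)\|g_k\|^2 .
\]

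\textbf{Step 2 (model decrease).} The model decrease is
\[
m_k^\phi(0)-m_k^\phi(p_k)=a_k\|g_k\|^2-\tfrac{a_k^2}{2}g_k^\top H_kg_k .
\]
Since $\|H_k\|\le 1+\mu\|\nabla c(x_k)\|^2\le 1+\mu C_c^2=:h$, we get the lower bound
\[
m_k^\phi(0)-m_k^\phi(p_k)\ge a_k\Bigl(1-\tfrac{h a_k}{2}\Bigr)\|g_k\|^2 .
\]
This lower bound is positive as long as $a_k\le 1/h$. That condition holds in the boundary case, and in the interior case $a_k=\|g_k\|^2/g_k^\top H_kg_k\ge 1/h$, which gives the claim directly since $2(1-c_0)/(e-c_0h)\le 1/h$ is easily checked using $e\ge h$.

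\textbf{Step 3 (acceptance).} Acceptance $r_k>c_0$ gives
\[
\varphi_i(x_k+p_k)\le\varphi_i(x_k)-c_0a_k\Bigl(1-\tfrac{ha_k}{2}\Bigr)\|g_k\|^2 .
\]

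\textbf{Step 4 (combine).} Mimicking the paper's logic in Lemma~\ref{lem:range_ak_1}, we compare the two decrease expressions, i.e.\ treat the smoothness bound as not exceeding the guaranteed acceptance decrease. Dividing by $a_k\|g_k\|^2$ yields
\[
1-\tfrac{L_{\phi,i}a_k}{2}\le c_0\Bigl(1-\tfrac{ha_k}{2}\Bigr),
\]
which rearranges to
\[
a_k\,\frac{L_{\phi,i}-c_0h}{2}\ge 1-c_0 .
\]
Replacing $L_{\phi,i}$ by $e=\max\{L_{\phi,i},h\}\ge L_{\phi,i}$ only weakens the inequality, and it guarantees that the denominator $e-c_0h\ge(1-c_0)h>0$. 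Dividing then gives $a_k\ge 2(1-c_0)/(e-c_0h)$.

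\textbf{Main obstacle.} The hard step is justifying the direction of the comparison in Step~4. Smoothness gives an upper bound on $\varphi_i(x_{k+1})$, and acceptance gives another upper bound, so they do not logically chain. The paper's unconstrained proof uses this same heuristic. To make it rigorous, I would instead argue by contrapositive. If $a_k$ is below the threshold, then the smoothness bound forces $r_k\ge c_0$-type progress, so the step is accepted without further radius shrinkage. Hence $\Delta_k$, and therefore $a_k=\Delta_k/\|g_k\|$, cannot be driven below the threshold at accepted steps. The introduction of $e$ is exactly what keeps the denominator positive uniformly in $\mu=1/\varepsilon$.
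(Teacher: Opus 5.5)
Your Steps 1--4 are essentially the paper's own proof. The paper also bounds $\varphi_i(x_{k+1})$ from above twice: once by smoothness (with $e$), and once via the acceptance test (using $\|H_k\|\le 1+\mu C_c^2$). It then ``combines'' these into $(c_0-1)+\tfrac{a_k}{2}\bigl(e-c_0(1+\mu C_c^2)\bigr)\ge 0$. As you suspect, that combination is not an inference, because two upper bounds on the same quantity say nothing about each other. So the key step is a genuine gap, both in your argument and in the paper's.

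Your contrapositive repair does not close it, because it proves the converse. Smoothness and $H_k\succeq I$ give two bounds: the actual reduction is at least $a_k\bigl(1-\tfrac{L_{\phi,i}a_k}{2}\bigr)\|g_k\|^2$, and the predicted reduction is at most $a_k\bigl(1-\tfrac{a_k}{2}\bigr)\|g_k\|^2$. Hence $r_k>c_0$ whenever $1-\tfrac{L_{\phi,i}a_k}{2}>c_0\bigl(1-\tfrac{a_k}{2}\bigr)$, i.e.\ for every sufficiently \emph{small} $a_k$. Equivalently, the inequality you want in Step 4, with $h$ replaced by $1$, holds at \emph{rejected} iterations, not accepted ones. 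Your conclusion that $a_k=\Delta_k/\|g_k\|$ ``cannot be driven below the threshold at accepted steps'' therefore does not follow, since nothing bounds $\Delta_k/\|g_k\|$ from below. For example, if $\Delta_0$ is small enough, the step at $k=0$ is a boundary step with tiny $a_0=\Delta_0/\|g_0\|$. The condition just displayed makes it accepted, yet it lies below the claimed bound.

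Your Step 2 contains a second error. The inequality $2(1-c_0)/(e-c_0h)\le 1/h$ is equivalent to $e\ge(2-c_0)h$, which does not follow from $e\ge h$. The interior case genuinely fails. Take $d=m=1$, $c(x)=x$, $f_i\equiv 0$. Then $g_k=\mu x_k$, $H_k=h=1+\mu$ and $L_{\phi,i}=\mu$, so $e=1+\mu$ and the claimed bound is $2/(1+\mu)$. Whenever $\mu|x_k|/(1+\mu)<\Delta_k$, the step is interior with $a_k=1/(1+\mu)$ and $x_{k+1}=x_k/(1+\mu)$. A direct computation gives $r_k=(2+\mu)/(1+\mu)>1$, so the step is accepted with $a_k$ equal to half the claimed bound.

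The statement is therefore false as written, and no rewording of Step 4 can prove it. What is true is only $a_k\ge\min\{1/(1+\mu C_c^2),\,\Delta_k/\|g_k\|\}$. A usable lower bound on $a_k$ has to come from a lower bound on $\Delta_k$ together with an upper bound on $\|g_k\|$ over $\mathcal{X}$. Such a bound depends on $\varepsilon$ and does not have the claimed form. This also affects the $a_{\min}$ used in Theorem~\ref{thm:penalty-first-order}.
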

\begin{proof}
The proof follows the same argument as 
Lemma~\ref{lem:range_ak_1} applied to the penalized objective
\[
\phi_i(x)=f_i(x)+\frac{\mu}{2}\|c(x)\|^2.
\]

Since $c$ is continuously differentiable and the iterates 
$\{x_k\}$ remain in the compact set $\mathcal X$ 
by Assumption~\ref{assu:compact}, 
there exists $C_c>0$ such that
\[
\|\nabla c(x_k)\| \le C_c, \quad \forall k.
\]

By the smoothness of $\phi_i$, we obtain
\[
\phi_i(x_{k+1})
\le
\phi_i(x_k)
- a_k \|g_k\|^2
+ \frac{e}{2} a_k^2 \|g_k\|^2,
\]
where
\[
e=\max\{L_{\phi,i},\,1+\mu C_c^2\}.
\]

Using the acceptance criterion of 
Algorithm~\ref{alg:penalty-tr} and
\[
\|\nabla c(x_k)g_k\|^2
\le
C_c^2 \|g_k\|^2,
\]
we obtain
\[
(c_0-1)
+
\frac{a_k}{2}
\left(
e
-
c_0(1+\mu C_c^2)
\right)
\ge 0.
\]

Since 
\(
e>c_0(1+\mu C_c^2),
\)
the desired bound follows.
\end{proof}

\begin{lemma}
\label{lem:penalty-model-accuracy-2}
Suppose that Assumption~\ref{assu:smooth_con} holds. 
Then there exists a constant $M_{\mu}>0$ such that
\[
|m_k^{\phi}(p_k)-\phi_i(x_k+p_k)|
\le
M_{\mu}\|p_k\|^2,
\]
where
\[
M_{\mu}
=
\frac{1}{2}
\left(
L_{\phi,\max}
+
L_{f,\max}
+
\mu L_c
\right).
\]
\end{lemma}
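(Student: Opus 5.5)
We follow the argument of Lemma~\ref{lem:penalty-model-accuracy}, with $f_i$ replaced by $\varphi_i$ and the identity replaced by $H_k=I+\mu\nabla c(x_k)^\top\nabla c(x_k)$. The idea is to write $\varphi_i(x_k+p_k)$ via the integral form of Taylor's theorem and subtract the model. Since $g_k=\nabla\varphi_i(x_k)$ and the model is measured relative to $\varphi_i(x_k)$ (i.e.\ $m_k^\phi(p)$ stands for $\varphi_i(x_k)+g_k^\top p+\tfrac12p^\top H_kp$, as in the unconstrained case), the first-order terms cancel. This leaves
\[
\varphi_i(x_k+p_k)-m_k^\phi(p_k)=\int_0^1\bigl[\nabla\varphi_i(x_k+tp_k)-\nabla\varphi_i(x_k)\bigr]^\top p_k\,\mathrm{d}t-\tfrac12 p_k^\top H_kp_k .
\]

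We then bound the two pieces separately. The integral term is at most $\int_0^1 tL_{\phi,i}\|p_k\|^2\,\mathrm{d}t=\tfrac{L_{\phi,i}}{2}\|p_k\|^2\le\tfrac{L_{\phi,\max}}{2}\|p_k\|^2$, using the Lipschitz continuity of $\nabla\varphi_i$ stated after Assumption~\ref{assu:smooth_con}.

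The quadratic term is where care is needed. Bounding $\|H_k\|$ directly by $1+\mu C_c^2$ would not give the stated constant, which contains no $C_c$. So we split it as
\[
\tfrac12 p_k^\top H_kp_k=\tfrac12\|p_k\|^2+\tfrac{\mu}{2}\|\nabla c(x_k)p_k\|^2 .
\]
The identity part contributes $\tfrac12\|p_k\|^2$. Following the paper's unconstrained lemma, where the $\tfrac12\|p\|^2$ term was absorbed into $\tfrac{L_{f,\max}}{2}\|p\|^2$, we absorb it into $\tfrac{L_{f,\max}}{2}\|p_k\|^2$. This is valid whenever $L_{f,\max}\ge1$; otherwise the constant $1$ can simply replace $L_{f,\max}$ with no change to the argument.

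The constraint-curvature part $\tfrac{\mu}{2}\|\nabla c(x_k)p_k\|^2$ is the main obstacle. The plan is to regroup it with the matching second-order part of the penalty inside the integral, rather than bounding it on its own. Write
\[
\nabla\bigl(\tfrac{\mu}{2}\|c\|^2\bigr)(x)=\mu\nabla c(x)^\top c(x),
\]
and expand $c(x_k+tp_k)=c(x_k)+\nabla c(x_k)tp_k+R(t)$, where $\|R(t)\|\le\tfrac{L_c}{2}t^2\|p_k\|^2$. The term linear in $\nabla c(x_k)p_k$ then cancels the Gauss--Newton part of $H_k$ exactly. What remains are contributions involving $\nabla c(x_k+tp_k)-\nabla c(x_k)$ and $R(t)$, which are controlled by $L_c$. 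On the compact set $\mathcal X$ of Assumption~\ref{assu:compact}, and with $\|p_k\|\le\Delta_k\le\bar\Delta$, these are bounded by $\tfrac{\mu L_c}{2}\|p_k\|^2$ once the bounded factors of $\|c\|$ are absorbed.

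This last absorption is the delicate step. If the clean cancellation does not go through, we fall back on the cruder route of the paper's unconstrained lemma: bound the penalty's contribution by $\tfrac{\mu L_c}{2}\|p_k\|^2$ directly, treating $L_c$ as the effective Lipschitz modulus of the penalty's Gauss--Newton correction. Summing the three contributions $\tfrac{L_{\phi,\max}}{2}$, $\tfrac{L_{f,\max}}{2}$ and $\tfrac{\mu L_c}{2}$ via the triangle inequality gives $M_\mu=\tfrac12(L_{\phi,\max}+L_{f,\max}+\mu L_c)$. As in Lemma~\ref{lem:penalty-model-accuracy}, it follows in addition that the bound is at most $M_\mu\Delta_k^2$.
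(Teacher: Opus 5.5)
\textbf{There is a genuine gap, at the step you yourself flagged as delicate, and it cannot be closed.}

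Your regrouping is fine as algebra. Write $c(x_k+p_k)=c(x_k)+\nabla c(x_k)p_k+R$ with $\|R\|\le\frac{L_c}{2}\|p_k\|^2$. Then the Gauss--Newton term cancels, and the penalty part of $m_k^\phi(p_k)-\phi_i(x_k+p_k)$ equals $-\frac{\mu}{2}\bigl(\|R\|^2+2c(x_k)^\top R+2(\nabla c(x_k)p_k)^\top R\bigr)$. The best you can say is that this is at most $\frac{\mu L_c}{2}\bigl(\|c(x_k)\|+C_c\|p_k\|+\frac{L_c}{4}\|p_k\|^2\bigr)\|p_k\|^2$ in absolute value. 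Nothing in the hypotheses makes the bracket at most $1$, so ``absorbing'' $\|c(x_k)\|$ and $C_c\bar\Delta$ changes the constant.

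Your fallback just restates the claim. The term $\frac{\mu}{2}\|\nabla c(x_k)p_k\|^2$ scales with $\|\nabla c(x_k)\|^2$, which $L_c$ does not control: for $c(x)=Ax-b$ one has $L_c=0$.

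Your final tally also double-counts. The penalty part of the Taylor integral cannot both be spent cancelling the Gauss--Newton term and be included in the bound $\frac{L_{\phi,\max}}{2}\|p_k\|^2$ for the whole integral. Done consistently, your route gives $\frac12(1+L_{f,\max})\|p_k\|^2$ plus the residual above, with no $L_{\phi,\max}$ term at all.

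\textbf{The printed constant is false in general.} Your observation that $\|H_k\|\le 1+\mu C_c^2$ does not yield it was correct, and the right conclusion is that no cleverer bound exists. Take $d=1$, $m=2$, $f_i(x)=\frac12x^2$ and $c(x)=r\bigl(\cos(x/r),\sin(x/r)\bigr)^\top$. These satisfy Assumption~\ref{assu:smooth_con} with $L_{f,i}=1$ and $L_c=1/r$. Since $\|c\|\equiv r$ and $\nabla c(x)^\top c(x)=0$, we get $\phi_i(x)=\frac12x^2+\frac{\mu r^2}{2}$, $L_{\phi,i}=1$, $g_k=x_k$ and $H_k=1+\mu$. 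Hence $m_k^\phi(p)-\phi_i(x_k+p)=\frac{\mu}{2}p^2$ exactly, while $M_\mu=1+\frac{\mu}{2r}$. For $r=2$ and $\mu=10$ the claimed inequality fails for every $p_k\neq0$. Note that $\|c(x_k)\|L_c=1$ here, which is exactly the factor your residual bound retains.

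The paper's one-line proof (repeat Lemma~\ref{lem:penalty-model-accuracy} using the Lipschitz bound on $\nabla\phi_i$) has the same hole. Carried out honestly, it bounds the Taylor integral by $\frac{L_{\phi,\max}}{2}\|p_k\|^2$ and the model curvature by $\frac12\|H_k\|\|p_k\|^2\le\frac12(1+\mu C_c^2)\|p_k\|^2$ on $\mathcal X$. The valid constant is therefore $\frac12(L_{\phi,\max}+1+\mu C_c^2)$; the printed $L_{f,\max}+\mu L_c$ has silently replaced $1+\mu\|\nabla c(x_k)\|^2$.

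\textbf{What to prove instead.} Under Assumption~\ref{assu:compact}, prove either that corrected constant or your sharper residual version $\frac12(1+L_{f,\max})+\frac{\mu L_c}{2}\bigl(\sup_{\mathcal X}\|c\|+C_c\bar\Delta+\frac{L_c}{4}\bar\Delta^2\bigr)$. Both are $O(\mu)$, which is all Theorem~\ref{thm:penalty-first-order} needs, since $M_\mu$ enters only through $\log(4rM_\mu\bar\Delta/\epsilon)$. The $C_c^2$ version has the added benefit that $2M_\mu>\|H_k\|$, which the radius-increase argument uses.
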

\begin{proof}
Repeating the proof of Lemma~\ref{lem:penalty-model-accuracy}
for the function $\phi_i$ 
and using the Lipschitz bound on $\nabla\phi_i$ 
yields the result.
\end{proof}

To analyze the convergence of stochastic algorithms for the constrained
problem~\eqref{eq:constraint}, it is necessary to specify an appropriate
notion of approximate stationarity.
Unlike unconstrained optimization, where $\|\nabla f(x)\|$ serves as a
natural measure of optimality, constrained problems are typically
characterized by the Karush--Kuhn--Tucker (KKT) conditions.
Since stochastic methods can only be expected to achieve approximate
optimality, we adopt the standard notion of an $\varepsilon$-KKT point.

\begin{definition}[$\varepsilon$-KKT point]
Let $\varepsilon > 0$. 
A point $x \in \mathbb{R}^d$ is called an $\varepsilon$-KKT point of
problem~\eqref{eq:constraint} if there exists a Lagrange multiplier
$\lambda \in \mathbb{R}^m$ such that
\[
\|\nabla f(x) + \nabla c(x)^\top \lambda\| \le \varepsilon,
\qquad
\|c(x)\| \le \varepsilon.
\]
\end{definition}

\begin{theorem}
\label{thm:penalty-first-order} Suppose that Assumption \ref{assu:unbiased_con}, Assumption \ref{assu:smooth_con} Assumption \ref{assu:compact} and Assumption \ref{assu:sgc_con} hold. The algorithm operates as in Algorithm \ref{alg:penalty-tr}
with adaptive stepsizes $a_{k}$, the penalty parameter is chosen
as $\mu=\frac{1}{\epsilon}$ for some small $\epsilon>0$ to guarantee feasibility accuracy.

Then Algorithm \ref{alg:penalty-tr} achieves the rate: 
\[
\min_{k\in[K-1]}\mathbb{E}\left[\|\nabla\phi(x_{k})\|^{2}\right]\leq\frac{2\delta}{K}\left(1+\frac{\log\frac{4rM_{\mu}\bar{\Delta}}{\epsilon}}{\log\nu}\right)\mathbb{E}[\phi(x_{0})-\phi(x^{*})],
\]
where 
\[
\delta=1/(-\tau La_{\max}^{2}+(1-\tau)a_{\max}+(1+\tau)\cdot\frac{2(1-c_0)}
{\,e-c_0(1+\mu C_c^2)\,}).
\]
Hence, to guarantee $\mathbb{E}[\|\nabla\phi(x_{k})\|^{2}]\leq\epsilon$,
it suffices to run 
\[
K=O\left(\frac{1}{\epsilon^{2}}\log\frac{1}{\epsilon}\right)
\]
iterations.
\end{theorem}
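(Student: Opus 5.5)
The argument mirrors the proof of Theorem~\ref{thm:first_order}, with $f$ replaced by the penalized objective $\phi$, the model $m_k$ replaced by $m_k^{\phi}$, and the constants $(L_f, M, \rho)$ replaced by $(L_\phi, M_\mu, \tau)$.

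\textbf{Descent inequality on successful iterations.} First I apply $L_\phi$-smoothness of $\phi$ to the step $x_{k+1}-x_k=-a_k g_k$. The polarization identity then gives
\[
2(\phi(x_{k+1})-\phi(x_k)) \le a_k\|\nabla\phi(x_k)-g_k\|^2 - a_k(\|\nabla\phi(x_k)\|^2+\|g_k\|^2) + L_\phi a_k^2\|g_k\|^2 .
\]
On successful iterations I bound $a_k$ using Lemma~\ref{lem:range_ak_2}:
\[
a_{\min}=\min\Bigl\{\frac{2(1-c_0)}{e-c_0(1+\mu C_c^2)},\,a_{\max}\Bigr\}\le a_k\le a_{\max}.
\]
On unsuccessful iterations I set $a_k=0$, so $\phi(x_{k+1})=\phi(x_k)$.

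\textbf{Variance control.} Taking $\mathbb{E}_k$ and using Assumption~\ref{assu:unbiased_con} reduces everything to bounding $\mathbb{E}_k\|g_k\|^2$ by $\tau\|\nabla\phi(x_k)\|^2$. I obtain this from Assumption~\ref{assu:sgc_con} with the specific multiplier $\lambda_k=\mu c(x_k)$. With this choice,
\[
\nabla_x L(x_k,\lambda_k;\xi)=\nabla f_i(x_k)+\mu\nabla c(x_k)^\top c(x_k)=g_k,
\qquad
\nabla_x L(x_k,\lambda_k)=\nabla\phi(x_k).
\]
Hence the SGC for the Lagrangian transfers exactly to the penalized gradient estimator. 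This identification is the key step that makes the unconstrained argument carry over.

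\textbf{Per-step bound and summation.} Repeating the algebra of Theorem~\ref{thm:first_order} yields
\[
\|\nabla\phi(x_k)\|^2\le 2\delta\,\mathbb{E}[\phi(x_k)-\phi(x_{k+1})],
\]
with the stated $\delta$ (taking $L=L_\phi$). This requires $\delta^{-1}>0$, which I secure by the same case analysis on $a_{\max}$, namely $a_{\max}<2/(\tau L_\phi)$ together with a suitable choice of $c_0$.

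Next I sum over the successful iterations and telescope, bounding the total by $\mathbb{E}[\phi(x_0)-\phi(x^*)]$. I convert $\#S_K$ into $K$ via the penalized analogues of Lemma~\ref{lem:Ultimate Progress at Nonoptimal Points}, Corollary~\ref{cor:Lower Bound on Trust Region Radius} and Lemma~\ref{the radio of unsuccessful iterations to successful ones}, with $M$ replaced by $M_\mu$ from Lemma~\ref{lem:penalty-model-accuracy-2} and $\|H_k\|\le 1+\mu C_c^2$ used in Lemma~\ref{lem:CauchyPoint}. This gives $\#U_K/\#S_K\le \log(4rM_\mu\bar\Delta/\epsilon)/\log\nu$. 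The displayed rate follows.

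\textbf{Main obstacle: the $\mu$-dependence of the constants.} With $\mu=1/\epsilon$ we have $L_\phi$, $M_\mu$ and $e$ all of order $1/\epsilon$.
\begin{itemize}
\item The admissible $a_{\max}$ and $a_{\min}$ scale like $\epsilon$, so $\delta=O(1/\epsilon)$.
\item The logarithmic factor becomes $\log(M_\mu/\epsilon)=O(\log(1/\epsilon))$.
\item $\phi(x_0)-\phi(x^*)$ stays bounded by compactness (Assumption~\ref{assu:compact}) if $c(x_0)$ is small or absorbed into the constant. If this term is not bounded independently of $\mu$, it would need separate treatment.
\end{itemize}
Requiring the right-hand side to be at most $\epsilon$ then gives $K=O(\epsilon^{-1}\cdot\epsilon^{-1}\log(1/\epsilon))=O(\epsilon^{-2}\log(1/\epsilon))$.

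I expect the hardest part to be checking that the positivity condition on $\delta^{-1}$ and the choice of $c_0$ remain compatible uniformly as $\mu$ grows. Specifically, one needs the ratio $\frac{1-c_0}{e-c_0(1+\mu C_c^2)}$ to stay below $1/(\tau L_\phi)$ with both sides of order $\epsilon$. My plan is to handle this by choosing $c_0$ close to $1$, independently of $\epsilon$.
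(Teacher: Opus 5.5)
Your proposal follows the paper's proof essentially step for step: the $L_\phi$-smoothness/polarization descent bound, $a_k=0$ on unsuccessful steps and $a_{\min}\le a_k\le a_{\max}$ from Lemma~\ref{lem:range_ak_2} on successful ones, the transfer of Assumption~\ref{assu:sgc_con} via $\lambda_k=\mu c(x_k)$ (so that $g_k=\nabla_x L(x_k,\lambda_k;\xi)$ and $\nabla\phi(x_k)=\nabla_x L(x_k,\lambda_k)$), the restriction $a_{\max}<2/(\tau L_\phi)$, summation over successful iterations with the unsuccessful-to-successful ratio bound, and finally $L_\phi=O(1/\epsilon)\Rightarrow\delta=O(1/\epsilon)$. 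The caveats you raise (possible $\mu$-dependence of $\phi(x_0)-\phi(x^*)$, and uniform positivity of $\delta^{-1}$ as $\mu$ grows) are not treated in the paper, which simply invokes ``the same quadratic argument''; note only that taking $c_0$ close to $1$ independently of $\epsilon$ requires $e-(1+\mu C_c^2)$ to be of order $\mu$, since if $e=1+\mu C_c^2$ the ratio $\frac{1-c_0}{e-c_0(1+\mu C_c^2)}$ equals $\frac{1}{1+\mu C_c^2}$ for every $c_0$.
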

\begin{proof}
The proof follows the same line of argument as 
Theorem~\ref{thm:first_order}, applied to the penalized objective
\(
\phi(x)=f(x)+\frac{\mu}{2}\|c(x)\|^{2}.
\)
From standard smoothness theory and the update $x_{k+1}=x_{k}-a_{k}g_{k}$, we obtain 
\[
\phi(x_{k+1})-\phi(x_{k}) \leq\frac{a_{k}}{2}\left(\|\nabla\phi(x_{k})-g_{k}\|^{2}-\|\nabla\phi(x_{k})\|^{2}-\|g_{k}\|^{2}\right)+\frac{L_{\phi}a_{k}^{2}}{2}\|g_{k}\|^{2}.
\]
As in the original analysis, for unsuccessful iterations, $a_k=0$. For successful iterations, the step parameter satisfies $a_{\min}\leq a_{k}\leq a_{\max}$,
where 
\[a_{\min}:=\min\left\{ \frac{2(1-c_0)}
{\,e-c_0(1+\mu C_c^2)\,},a_{\max}\right\} .\]
Taking expectations for successful iterations yields
\begin{align*}
2\mathbb{E}\left[\phi(x_{k+1})-\phi(x_{k})\right] & \leq (a_{\max}-a_{\min}+L_{\phi}a_{\max}^{2})\mathbb{E}[\|g_{k}\|^{2}]-(a_{\max}+a_{\min})\|\nabla\phi(x_{k})\|^{2}.
\end{align*}
Under Assumption \ref{assu:sgc_con}, let $\lambda=\mu c(x_k)$. The stochastic gradient satisfies
\[\mathbb{E}[\|g_k\|^2]\le \tau \|\nabla\phi(x_k)\|^2,\]
which implies
\[
2\mathbb{E}[\phi(x_{k+1})-\phi(x_{k})]\leq\left((a_{\max}-a_{\min}+L_{\phi}a_{\max}^{2})\tau-(a_{\max}+a_{\min})\right)\|\nabla\phi(x_{k})\|^{2}.
\]
Define 
\[\delta^{-1}:=(a_{\max}+a_{\min})-\tau(a_{\max}-a_{\min}+L_{\phi}a_{\max}^{2}).\]
To ensure $\delta>0$, the same quadratic argument as in 
Theorem~\ref{thm:first_order} yields the admissible restriction
\[
a_{\max}<\frac{2}{\tau L_{\phi}}.
\]
Then,
\[
\|\nabla\phi(x_{k})\|^{2}\leq2\delta\mathbb{E}[\phi(x_{k})-\phi(x_{k+1})].
\]
Summing over successful iterations and using the standard bound on the
number of unsuccessful steps gives
\[
\min_{k\in[K-1]}
\mathbb{E}\|\nabla\phi(x_k)\|^2
\le
\frac{2\delta}{K}
\left(
1+\frac{\log\frac{4rM_{\mu}\bar{\Delta}}{\epsilon}}{\log\nu}
\right)
\mathbb{E}[\phi(x_0)-\phi(x^*)].
\] 
Finally, since $\mu=\frac{1}{\epsilon}$,
\[
L_{\phi}
=
L_f+\mu L_c
=
O\!\left(\frac{1}{\epsilon}\right),
\]
which implies
\(
\delta=O(1/\epsilon)
\)
and therefore
\[
K
=
O\!\left(\frac{1}{\epsilon^{2}}\log\frac{1}{\epsilon}\right).
\]

\end{proof}

\begin{lemma}
	\label{lem:feasibility}
	Suppose that Assumption~\ref{assu:smooth_con}, Assumption~\ref{assu:compact} and Assumption~\ref{assu:LICQ} hold and the penalty parameter is set as $\mu = \frac{1}{\epsilon}$ for some small $\epsilon>0$.

	Then for any iterate $x_k$ satisfying $\|\nabla \phi(x_k)\| \leq \epsilon$, it holds that
	\[
	\|c(x_k)\| \leq \epsilon \cdot \frac{\epsilon + G}{\sigma_{\min}},
	\]
    where $G>0$ is a uniform bound of $\|\nabla f(x)\|$ over the compact set $\mathcal{X}$ guaranteed by Assumption~\ref{assu:compact}.
\end{lemma}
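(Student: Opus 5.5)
The plan is to write out the gradient of the penalized objective explicitly and invert the constraint Jacobian term using LICQ. With $J(x)=\nabla c(x)$, we have
\[
\nabla\phi(x_k)=\nabla f(x_k)+\mu J(x_k)^\top c(x_k).
\]
Rearranging and applying the triangle inequality gives
\[
\mu\|J(x_k)^\top c(x_k)\|\le \|\nabla\phi(x_k)\|+\|\nabla f(x_k)\|\le \epsilon+G.
\]
The first term is bounded by the hypothesis $\|\nabla\phi(x_k)\|\le\epsilon$. The second is bounded by $G$, which is finite because $\nabla f$ is continuous (Assumption~\ref{assu:smooth_con}) and the iterates lie in the compact set $\mathcal X$ (Assumption~\ref{assu:compact}).

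Next we lower bound $\|J(x_k)^\top c(x_k)\|\ge\sigma_{\min}\|c(x_k)\|$. This holds because the smallest singular value of $J^\top$, viewed as a map $\mathbb R^m\to\mathbb R^d$, equals $\sigma_{\min}(J)$ whenever $J$ has full row rank. Combining this with the previous display and $\mu=1/\epsilon$ gives
\[
\frac{1}{\epsilon}\,\sigma_{\min}\|c(x_k)\|\le \epsilon+G,
\]
which is exactly $\|c(x_k)\|\le\epsilon(\epsilon+G)/\sigma_{\min}$.

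The main obstacle is that Assumption~\ref{assu:LICQ} only guarantees $\sigma_{\min}(J(x))\ge\sigma_{\min}$ at \emph{feasible} points, whereas $x_k$ is generally infeasible. I would handle this in one of two ways.

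\textbf{Option 1: continuity and compactness.} Since $J$ is continuous and $\mathcal X$ is compact, the function $x\mapsto\sigma_{\min}(J(x))$ is continuous. Hence there is a neighborhood $\{x\in\mathcal X:\|c(x)\|\le\eta\}$ of the feasible set on which $\sigma_{\min}(J(x))\ge\sigma_{\min}/2$, or on which the bound holds with $\sigma_{\min}$ itself after a relabeling of the constant. A crude a priori bound then shows that points with $\|\nabla\phi(x_k)\|\le\epsilon$ and large $\mu$ must lie in this neighborhood: $\|J^\top c\|\le\epsilon(\epsilon+G)$ is small, and on the compact set $\{x\in\mathcal X:\|c(x)\|\ge\eta\}$ the quantity $\|J^\top c\|$ is bounded away from zero, provided $J^\top c\neq 0$ there (no infeasible stationary points of $\|c\|^2$). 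For sufficiently small $\epsilon$ this forces $\|c(x_k)\|<\eta$.

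\textbf{Option 2: strengthen the assumption.} Alternatively, and more in the spirit of the paper, I would interpret Assumption~\ref{assu:LICQ} as holding uniformly on $\mathcal X$, and state this interpretation explicitly. The main argument is then the two-line computation above, and the neighborhood argument of Option~1 serves as a justification of that interpretation for small $\epsilon$.
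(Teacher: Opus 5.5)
Your two-line computation is exactly the paper's proof. The paper writes $\nabla\phi=\nabla f+\mu J^\top c$, applies the triangle inequality, and then invokes Assumption~\ref{assu:LICQ} directly at $x_k$ to get $\|J(x_k)^\top c(x_k)\|\ge\sigma_{\min}\|c(x_k)\|$. That is, it tacitly uses the uniform-on-$\mathcal X$ reading that you make explicit in Option~2.

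The obstacle you flag is a genuine gap in the paper's argument, not in yours. The lemma as literally stated fails. Take $d=m=1$, $f\equiv 0$, $c(x)=x^2-1$ (LICQ holds with $\sigma_{\min}=2$) and $x_k\equiv x_0=0$. Then $\nabla\phi(0)=0$ for every $\mu$, but $|c(0)|=1$.

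Your Option~1 is a sound repair only under the extra hypothesis you name: no infeasible stationary points of $\|c\|^2$ in $\mathcal X$, which is exactly what this example violates. Even then, $\sigma_{\min}$ must be replaced by a smaller constant such as $\sigma_{\min}/2$, and the bound holds only for $\epsilon$ below a problem-dependent threshold.
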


\begin{proof}
	By definition of the penalized objective,
	\[
\phi(x) = f(x) + \frac{\mu}{2}\|c(x)\|^2,
\qquad 
\nabla \phi(x) = \nabla f(x) + \mu J(x)^{\top} c(x).
	\]
Evaluating at $x_k$ and	rearranging yields
	\[
	J(x_k)^\top c(x_k) = \frac{1}{\mu} \left( \nabla \phi(x_k) - \nabla f(x_k) \right).
	\]
	Taking norms and using the triangle inequality gives
	\[
	\|J(x_k)^\top c(x_k)\| \leq \frac{1}{\mu} \left( \|\nabla \phi(x_k)\| + \|\nabla f(x_k)\| \right).
	\]
    Under Assumption~\ref{assu:LICQ}, the Jacobian $J(x_k)$ has full row rank, and its smallest singular value satisfies $\sigma_{\min}>0$.
    Consequently,
	\[
	\|J(x_k)^\top c(x_k)\| \geq \sigma_{\min} \cdot \|c(x_k)\|.
	\]
	Combining the two inequalities yields
    \[\|c(x_k)\|
    \le \frac{1}{\mu\,\sigma_{\min}}
    \bigl(\|\nabla \phi(x_k)\| + \|\nabla f(x_k)\|\bigr).\]
    Using Assumption~\ref{assu:compact}, since $\nabla f$ is continuous and the iterates remain in a compact set, there exists a constant $G>0$ such that
\[
\|\nabla f(x_k)\| \le G.
\]
Together with the condition $\|\nabla \phi(x_k)\| \le \epsilon$, we obtain
\[\|c(x_k)\|
    \le \frac{1}{\mu\,\sigma_{\min}}(\epsilon + G)
    = \epsilon\,\frac{\epsilon + G}{\sigma_{\min}},\]
where the last equality uses $\mu = 1/\epsilon.$

This concludes the proof.
\end{proof}

\begin{corollary}
\label{cor:kkt}
Suppose that the assumptions of Theorem~\ref{thm:penalty-first-order} hold and let $\varepsilon \le 1$. 
Then for any iterate $x_k$ satisfying 
\[
\|\nabla \phi(x_k)\| \le \varepsilon,
\]
there exists a multiplier $\lambda_k := \mu c(x_k)$ such that
\[
\|\nabla f(x_k) + \nabla c(x_k)^\top \lambda_k\| \le \varepsilon
\quad \text{and} \quad
\|c(x_k)\| \le C \varepsilon,
\]
where
\[
C := \frac{G+1}{\sigma_{\min}}
\]
is a constant independent of $\varepsilon$.
Hence, $x_k$ is an $O(\varepsilon)$-approximate KKT point.
\end{corollary}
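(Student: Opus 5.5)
The plan is to verify the two conditions of the $\varepsilon$-KKT definition separately, taking the multiplier $\lambda_k := \mu c(x_k)$. This choice makes the stationarity residual coincide exactly with $\nabla\phi(x_k)$.

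\textbf{Stationarity.} By definition of the penalized objective, with $J(x)=\nabla c(x)$,
\[
\nabla\phi(x) = \nabla f(x) + \mu\,\nabla c(x)^\top c(x).
\]
Substituting $\lambda_k=\mu c(x_k)$ therefore gives
\[
\nabla f(x_k)+\nabla c(x_k)^\top\lambda_k=\nabla\phi(x_k).
\]
The hypothesis $\|\nabla\phi(x_k)\|\le\varepsilon$ then yields the first inequality directly, with no further estimation.

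\textbf{Feasibility.} Here I invoke Lemma~\ref{lem:feasibility}, whose hypotheses are Assumptions~\ref{assu:smooth_con}, \ref{assu:compact} and \ref{assu:LICQ} together with $\mu=1/\varepsilon$. It gives
\[
\|c(x_k)\|\le\varepsilon\,\frac{\varepsilon+G}{\sigma_{\min}}.
\]
Since $\varepsilon\le 1$, we have $\varepsilon+G\le 1+G$, so $\|c(x_k)\|\le C\varepsilon$ with $C=(G+1)/\sigma_{\min}$. This constant is independent of $\varepsilon$ because $G$ depends only on $\nabla f$ over the compact set $\mathcal X$, and $\sigma_{\min}$ comes from Assumption~\ref{assu:LICQ}. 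Taking $\max\{1,C\}\varepsilon$ as the tolerance, both KKT residuals are bounded by a constant multiple of $\varepsilon$, which makes $x_k$ an $O(\varepsilon)$-approximate KKT point.

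\textbf{Main obstacle.} The one delicate point is the hypothesis list. Theorem~\ref{thm:penalty-first-order} does not itself list LICQ, whereas Lemma~\ref{lem:feasibility} needs it. In addition, the lemma applies the lower bound $\|J(x_k)^\top c(x_k)\|\ge\sigma_{\min}\|c(x_k)\|$ at the iterate $x_k$, which need not be feasible. I would handle this by assuming Assumption~\ref{assu:LICQ} explicitly, in the uniform form in which it is used in the proof of Lemma~\ref{lem:feasibility}, i.e.\ on $\mathcal X$. Alternatively, one can argue that for small $\varepsilon$ the iterate is close to the feasible set, and then use continuity of singular values on the compact $\mathcal X$ to keep $\sigma_{\min}(J(x_k))$ bounded below by, say, $\sigma_{\min}/2$; this only changes $C$ by a constant factor.
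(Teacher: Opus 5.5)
Your argument is correct and is the intended one. The paper gives no separate proof; the corollary follows exactly as you describe. The choice $\lambda_k=\mu c(x_k)$ makes the stationarity residual equal to $\nabla\phi(x_k)$, and Lemma~\ref{lem:feasibility} together with $\varepsilon\le 1$ gives $\|c(x_k)\|\le C\varepsilon$. You are also right that the hypotheses are incomplete. Your first remedy, assuming $\|J(x)^\top c(x)\|\ge\sigma_{\min}\|c(x)\|$ uniformly on $\mathcal X$ (which is what the proof of Lemma~\ref{lem:feasibility} actually uses), is the one to adopt.

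Drop the ``alternatively'' route, however, because it is circular. Near-feasibility of $x_k$ is precisely what the singular-value bound at $x_k$ is needed to prove, and near-stationarity of $\phi$ alone does not supply it. Here is a concrete counterexample:
\begin{itemize}
\item Take $d=m=1$, $f_i\equiv 0$, $c(x)=x^2-1$, $\mathcal X=[-2,2]$. All assumptions of Theorem~\ref{thm:penalty-first-order} hold trivially, since the gradients are deterministic.
\item Assumption~\ref{assu:LICQ} holds at the feasible points $\pm 1$ with $\sigma_{\min}=2$.
\item At $x_0=0$ one has $J(0)^\top c(0)=0$, so $g_k=0$ and the algorithm never moves.
\item Hence $\nabla\phi(x_k)=0$ for every $\mu$, while $\|c(x_k)\|=1$.
\end{itemize}
So with LICQ imposed only at feasible points the corollary is false, and the uniform version on $\mathcal X$ is genuinely necessary, not merely convenient.
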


\section{Numerical Experiments}
We evaluate the proposed STR method in over-parameterized stochastic optimization problems, considering both unconstrained and constrained settings. In all cases, STR is evaluated against standard stochastic optimization baselines and demonstrates competitive performance and improved robustness.

\subsection{Unconstrained Setting: Multi-class Classification}

We consider unconstrained multi-class image classification on the CIFAR-10 dataset, a standard benchmark for evaluating stochastic optimization methods in over-parameterized and highly nonconvex settings. As the learning model, we use a ResNet-20 architecture, following the standard design of \cite{he2016deep}.

We benchmark STR against representative stochastic optimization methods:
(1) constant step-size SGD,
(2) stochastic line-search methods (SLS), and
(3) Adam,
with hyperparameters selected by grid search using comparable tuning effort across methods.

For reproducibility, the final hyperparameters used in our experiments are listed below:

\begin{itemize}
    \item \textbf{SGD:} learning rate $lr = 0.2$, momentum $= 0.0$, weight decay $= 0.0$, mini-batch size $=128$.
    
    \item \textbf{Adam:} learning rate $lr = 10^{-3}$, $\beta_1 = 0.9$, $\beta_2 = 0.999$, $\epsilon = 10^{-8}$, mini-batch size $=128$.
    
    \item \textbf{SLS:} $c = 0.05$, $\beta = 0.9$, maximum step size $lr_{\max} = 2.0$, weight decay $= 0.0$, mini-batch size $=128$.
    
    \item \textbf{STR:} initial trust-region radius $\Delta_0 = 8$, maximum radius $\Delta_{\max} = 80$,
    $c_0 = 0.05$, $c_1 = 0.10$, $c_2 = 0.50$, 
    $\nu_1 = 2.0$, $\nu_2 = 5.0$, 
    and $\delta_{\max} = 80$, mini-batch size $=128$.
\end{itemize}

Each experiment is repeated over multiple independent runs with different random initializations, and performance is reported using the mean and standard deviation across runs.

\begin{figure}[t]
\centering
\includegraphics[width=\textwidth]{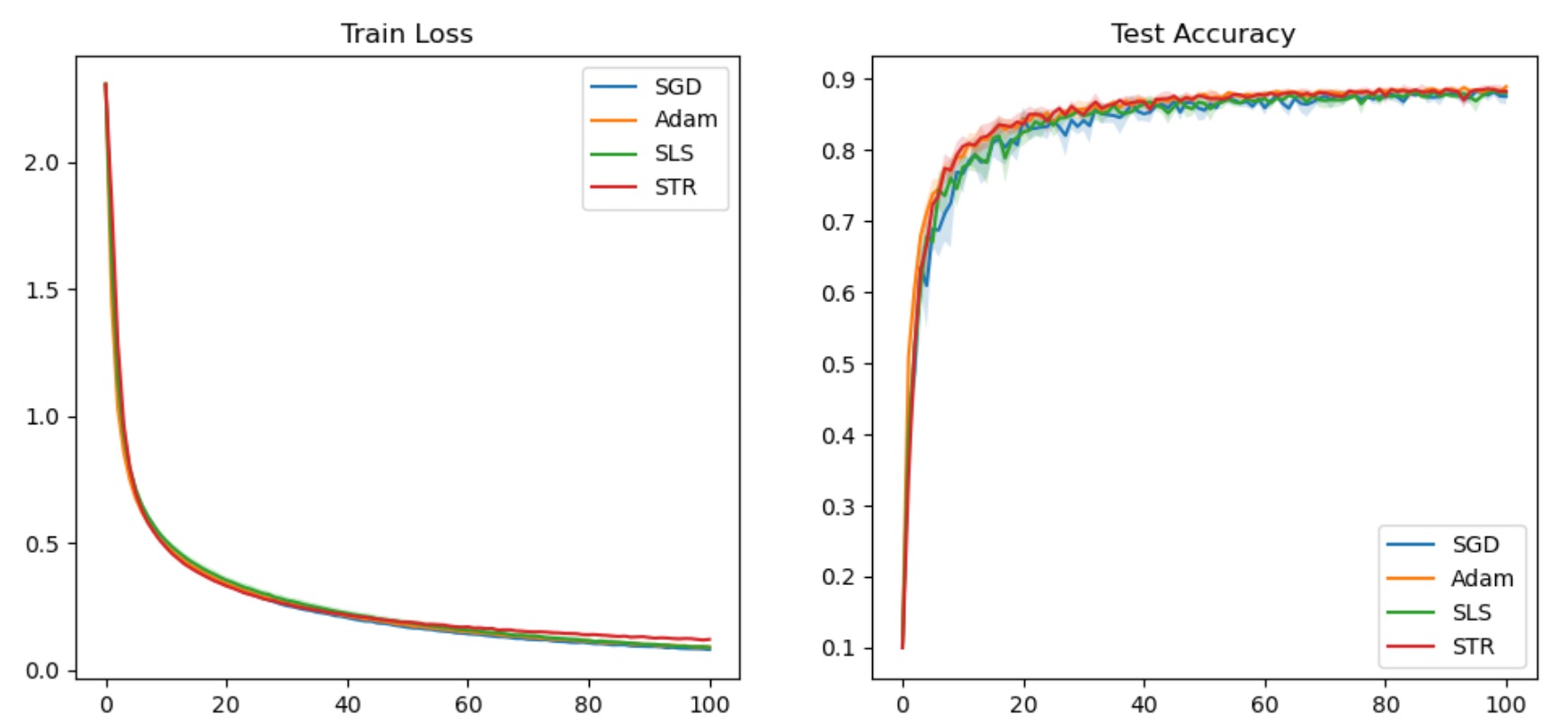}
\caption{Comparison of training loss and test accuracy for SGD, Adam, SLS, and STR on CIFAR-10.}
\label{fig:Multi-class}
\end{figure}

Figure~\ref{fig:Multi-class} shows the training loss and test accuracy curves. All methods exhibit similar convergence behavior and achieve nearly identical final test accuracy. STR matches the performance of the best baselines throughout training. While its training loss is slightly higher in later epochs, this conservative behavior does not affect generalization and reflects the stability of trust-region updates under stochastic noise. Overall, STR demonstrates competitive performance comparable to standard stochastic optimizers in unconstrained deep learning problems.

\subsection{Constrained Setting: Orthogonal Subspace Fitting}

We consider a constrained stochastic optimization problem arising in orthogonal subspace estimation. Given a data matrix $X \in \mathbb{R}^{d \times n}$ whose columns lie approximately in a $k$-dimensional subspace, we solve
\[
\min_{W \in \mathbb{R}^{d \times k}} \; \mathcal{L}(W) := \|(I - WW^\top)X\|_F^2 
\quad \text{s.t.} \quad W^\top W = I_k.
\]
This formulation corresponds to the classical orthogonal subspace fitting (or PCA) problem and has been widely studied in the context of constrained optimization on the Stiefel manifold~\cite{wen2013feasible}.

We convert the hard constraint into a quadratic penalty:
\[
\Phi_\mu(W) = \mathcal{L}(W) + \frac{\mu}{2} \|W^\top W - I_k\|_F^2,
\]
and apply the stochastic trust-region method described in Section 3 with penalty parameter $\mu = 1.0$.

Synthetic data are generated according to the spiked covariance model in~\cite{dar2020subspace}, with ambient dimension $d$ and intrinsic subspace dimension $k \ll d$.

We compare the proposed stochastic trust-region method with penalty (STR-P) against the following constrained stochastic optimization methods:
(1) Projected SGD with QR retraction (SGD+Proj),
(2) Riemannian gradient descent on the Stiefel manifold (RiemannianGD),
(3) Stochastic augmented Lagrangian method (AugLag),
with hyperparameters selected by grid search using comparable tuning effort across methods.

For reproducibility, the final hyperparameters used in our experiments are listed below:

\begin{itemize}
    \item \textbf{SGD+Proj:} learning rate $5\times 10^{-2}$, mini-batch size $32$.
    
    \item \textbf{RiemannianGD:} learning rate $5\times 10^{-2}$, mini-batch size $32$.
    
    \item \textbf{AugLag:} inner learning rate $0.01$, initial penalty parameter $\mu_0 = 0.1$, penalty growth factor $1.1$, inner epochs $10$, mini-batch size $32$, and damping parameter $\lambda_{\text{damp}} = 0.5$.
    
    \item \textbf{STR-P:} penalty parameter $\mu = 1.0$, initial trust-region radius $\Delta_0 = 0.2$, maximum radius $\Delta_{\max} = 5.0$, $c_0 = 0.05$, $c_1 = 0.1$, $c_2 = 0.9$, $\nu_1 = 1.5$, $\nu_2 = 0.5$ and mini-batch size $32$.
\end{itemize}
\begin{figure}[t]
\centering
\includegraphics[width=\textwidth]{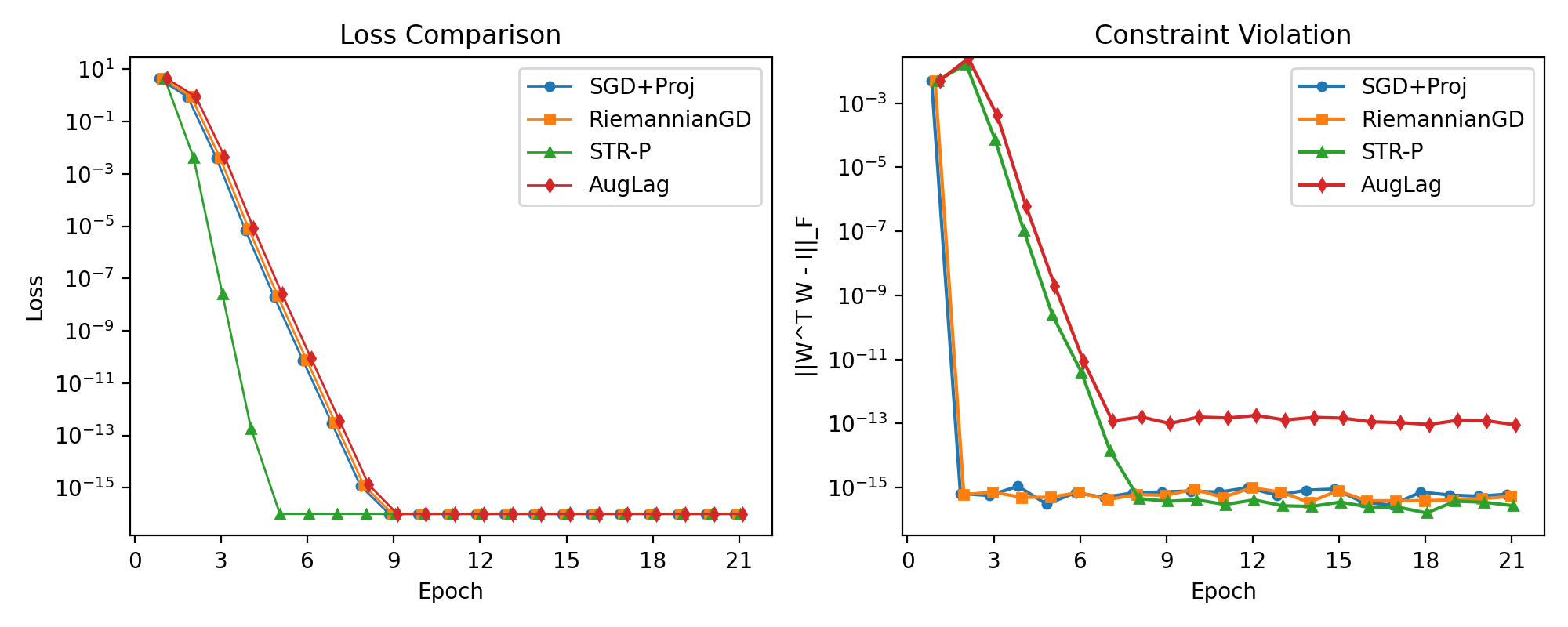}
\caption{Constrained orthogonal subspace fitting on synthetic spiked model data ($d=100$, $k=5$, $n=500$). Comparison of STR-P with projected SGD, Riemannian gradient descent, and stochastic augmented Lagrangian.}
\label{fig:pca_small}
\end{figure}

\begin{figure}[t]
\centering
\includegraphics[width=\textwidth]{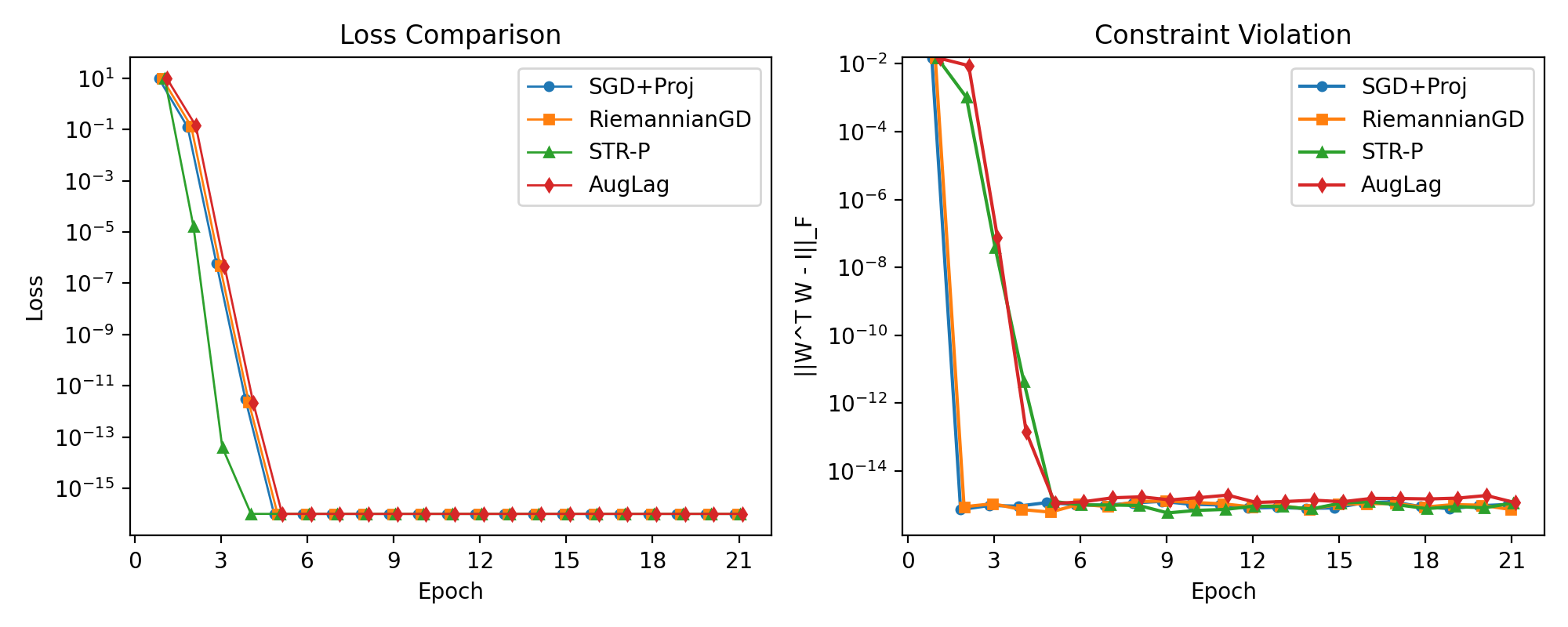}
\caption{Constrained orthogonal subspace fitting on synthetic spiked model data ($d=500$, $k=10$, $n=1000$). Comparison of STR-P with projected SGD, Riemannian gradient descent, and stochastic augmented Lagrangian.}
\label{fig:pca_large}
\end{figure}
Figure~\ref{fig:pca_small} and~\ref{fig:pca_large} compare different constrained stochastic optimization methods on the orthogonal subspace fitting problem for both moderate ($d=100,k=5,n=500$) and larger-scale ($d=500,k=10,n=1000$) settings. We report both the objective value and the constraint violation $\|W^\top W - I\|_F$. Projection-based methods (SGD+Proj and RiemannianGD) maintain feasibility at near machine precision throughout the optimization due to QR-based retraction at every iteration. In contrast, penalty-based methods (STR-P and AugLag) do not enforce feasibility explicitly and therefore exhibit a gradual reduction of the constraint violation. Despite this, STR-P consistently achieves faster objective reduction in the early and intermediate phases and drives the constraint violation to near machine precision within a small number of epochs. This behavior is consistent across both problem scales, demonstrating that the proposed method effectively balances fast convergence with accurate constraint satisfaction without requiring explicit projection at each step.

\section{Conclusion}
In this work, we proposed a unified framework for stochastic trust-region methods in both unconstrained and equality-constrained over-parameterized optimization problems. By leveraging interpolation and strong growth conditions, our first-order stochastic trust-region algorithm achieves an iteration complexity of $O(\epsilon^{-2} \log(1/\epsilon))$ for unconstrained problems, while the quadratic-penalty-based method attains an $\epsilon$-stationary point with $O(\epsilon^{-4} \log(1/\epsilon))$ iterations for constrained problems. The theoretical results are validated through numerical experiments on deep learning and orthogonally constrained subspace fitting tasks, demonstrating that stochastic trust-region methods can achieve stable and competitive performance without extensive manual step-size tuning. To the best of our knowledge, this is the first unified complexity analysis of stochastic trust-region methods under interpolation-type assumptions. Future work may explore extensions to second-order variants, improved model construction strategies, and adaptive penalty update mechanisms to further enhance efficiency and scalability.

\section*{Acknowledgement(s)}

The authors gratefully acknowledge Professor Xiao Wang from Sun Yat-sen University for her valuable guidance and helpful suggestions.

\bibliographystyle{tfs}
\bibliography{ref}

\end{document}